\documentclass[12pt,reqno]{amsart}

\setlength{\textheight}{23cm}
\setlength{\textwidth}{16.5cm}
\setlength{\topmargin}{-0.8cm}
\setlength{\parskip}{0.3\baselineskip}
\hoffset=-1.9cm

\usepackage{amssymb,hyperref}
\usepackage{amsmath}
\usepackage{amsfonts}
\usepackage[all]{xy}

\newtheorem{theorem}{Theorem}[section]
\newtheorem{proposition}[theorem]{Proposition}

\newtheorem{corollary}[theorem]{Corollary}

\theoremstyle{definition}
\newtheorem{definition}[theorem]{Definition}
\newtheorem{remark}[theorem]{Remark}
\newtheorem{example}[theorem]{Example}
\numberwithin{equation}{section}

\newcommand{\C}{\mathbb{C}}
\newcommand{\R}{\mathbb{R}}

\newcommand{\Z}{\mathbb{Z}}

\begin{document} 

\title[Moduli spaces   flat bundles on Sasakian manifolds with fixed basic structures ]{Non-abelian Hodge correspondence and moduli spaces of  flat bundles on Sasakian manifolds with fixed basic structures}

\author[H. Kasuya]{Hisashi Kasuya}

\address{Department of Mathematics, Graduate School of Science, Osaka University, Osaka,
Japan}

\email{kasuya@math.sci.osaka-u.ac.jp}

\subjclass[2010]{}

\keywords{Sasakian manifold,  Higgs bundle, harmonic metric, moduli space}

\begin{abstract}
We show that the moduli space of simple flat bundles over a compact Sasakian manifold is a finite disjoint union of  moduli spaces of simple flat bundles with fixed basic structures.
This gives a detailed description of the non-abelian Hodge correspondence on a compact Sasakian manifold at the level of moduli spaces.
As an application, we give an  analogue of Hitchin's properness of maps defined by the coefficients of the characteristic polynomial of Higgs fields.
\end{abstract}

\maketitle

\section{Introduction}
As an odd-dimensional analogue of the non-abelian Hodge correspondence on a compact K\"ahler manifold established by Hitchin \cite{Hit}, Corlette \cite{Cor} and Simpson \cite{Si1, Si2}, Biswas and the author \cite{BK, BK2} prove that on a compact Sasakian manifold there is a canonical $1$ to $1$ correspondence between (semi)simple flat bundles and (poly)stable basic Higgs bundles with vanishing basic Chern classes.
This correspondence is a category equivalence and so it induces a bijection between the sets of isomorphism classes.
We are interested in considering this correspondence at the level of moduli spaces.
We may expected that the moduli space of  simple flat bundles and the moduli space of  stable basic Higgs bundles with vanishing basic Chern classes are homeomorphic.
The purpose of this paper is to describe such expected homeomorphism precisely.

Fix a $(2n+1)$-dimensional Sasakian manifold $M$ as a base space.
Let ${\mathcal M}^{s}_{flat }(SL_r)$ be the moduli space of simple flat complex vector bundles over $M$ of rank $r$ with  trivial  determinant.
$M$ admits a canonical $1$-dimensional foliation defined by the Reeb vector field $\xi$.
Let $E$ be a smooth complex vector bundle of rank $r$.
Each flat connection $D$ on $E$ defines a basic structure $D_{\xi}$ associated with the canonical foliation.
Assume $D$ is simple.
As a subspace of ${\mathcal M}^{s}_{flat }(SL_r)$ containing the simple flat bundle $(E,D)$, we define the moduli space  
${\mathcal M}^{s}_{Bflat }(E, D_{\xi})$ of simple flat bundles with trivial determinant and with fixed basic structure $D_{\xi}$.
We prove the following:
\begin{theorem}\label{IMT}
${\mathcal M}^{s}_{Bflat }(E, D_{\xi})$ is an open and  closed set in ${\mathcal M}^{s}_{flat }(SL_r)$ and 
${\mathcal M}^{s}_{flat }(SL_r)$ is a finite disjoint union \[\bigcup_{(E, D_{\xi})} {\mathcal M}^{s}_{Bflat }(E, D_{\xi})\]
where the union runs over the set of isomorphism classes of basic vector bundles $(E, D_{\xi})$ coming from simple flat complex vector bundles $(E, D)$  of rank $r$ with the trivial  determinants.
\end{theorem}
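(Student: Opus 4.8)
The plan is to prove that the map
\[
\Phi\colon {\mathcal M}^{s}_{flat}(SL_r)\longrightarrow \{\text{isomorphism classes of basic bundles}\},\qquad [(E,D)]\longmapsto [(E,D_{\xi})],
\]
is locally constant. Granting this, the fibres of $\Phi$ are exactly the subsets ${\mathcal M}^{s}_{Bflat}(E,D_{\xi})$, so each of them is at once open and closed, and ${\mathcal M}^{s}_{flat}(SL_r)$ is their disjoint union. Finiteness of the decomposition is then automatic: under the Riemann--Hilbert correspondence ${\mathcal M}^{s}_{flat}(SL_r)$ is identified with the locus of simple points in the character variety $\mathrm{Hom}(\pi_1(M),SL_r(\C))/\!/SL_r(\C)$, which is a complex analytic space of finite type and therefore has only finitely many connected components; a locally constant map on such a space takes only finitely many values.

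To prove that $\Phi$ is locally constant I would pass to the local model of the moduli space at $[(E,D)]$, where first-order deformations of the flat connection $D$ are represented by the twisted de Rham cohomology $H^{1}(M,\mathrm{End}_{0}(E))$: a tangent vector is a $D$-closed $1$-form $\alpha$ with values in trace-free endomorphisms, taken modulo $D$-exact forms. If $D$ is deformed to $D+t\alpha$, the associated partial connection along the Reeb foliation deforms to $D_{\xi}+t\,\iota_{\xi}\alpha$, so the first-order variation of the basic structure is governed entirely by $\iota_{\xi}\alpha$; note also that $D$-exact deformations $\alpha=D\beta$ produce $\iota_{\xi}\alpha=\iota_{\xi}D\beta$, that is, an infinitesimal automorphism of $(E,D_{\xi})$ along the foliation, which does not change its isomorphism class. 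Thus it suffices to represent each class in $H^{1}(M,\mathrm{End}_{0}(E))$ by a form with $\iota_{\xi}\alpha=0$. This is precisely where the Sasakian structure intervenes: fixing the harmonic metric of $(E,D)$ and the Sasakian metric of $M$, the harmonic representatives produced by the transverse Hodge theory underlying \cite{BK,BK2} are basic, hence annihilated by $\iota_{\xi}$. Conceptually this expresses the rigidity of the Reeb direction; in the regular case the fibre class is central in $\pi_{1}(M)$, so by Schur's lemma its holonomy under a simple representation is a root of unity times the identity, a manifestly discrete invariant.

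The main obstacle is upgrading this infinitesimal rigidity to honest local constancy of the isomorphism class of $(E,D_{\xi})$. I would formulate $\Phi$ as a real-analytic map to a parameter space of foliated flat bundles along the Reeb foliation and show, using the computation above, that its differential vanishes identically, so that it is locally constant on each connected component; equivalently, and more concretely, I would normalize every nearby flat bundle $(E,D')$ by its harmonic metric --- which depends real-analytically on the point of the moduli space --- so that $D'_{\xi}$ becomes skew-Hermitian, and then integrate the vanishing first-order variation into an actual gauge transformation intertwining $D'_{\xi}$ with $D_{\xi}$. The delicate point is that this gauge transformation must be produced \emph{equivariantly with respect to the Reeb flow}, since only then does it preserve the basic structure; this is what the basicness $\iota_{\xi}\alpha=0$ of the harmonic representatives ultimately guarantees. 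Once the intertwining gauge transformations are in hand, openness, closedness and finiteness of the decomposition all follow at once.
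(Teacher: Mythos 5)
Your starting point is the correct one, and it is in fact the same mechanism the paper exploits: for a simple flat bundle over a compact Sasakian manifold the inclusion $A^{\ast}_{B}(M,\mathrm{End}_{0}(E))\subset A^{\ast}(M,\mathrm{End}_{0}(E))$ induces an isomorphism on first cohomology, so every first-order deformation of $D$ is represented by a basic form and the infinitesimal variation of the basic structure vanishes. Your finiteness argument (the character variety is quasi-projective, hence has finitely many connected components, and an open-closed decomposition of it must be finite) is also exactly the paper's.

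The genuine gap is the passage from this infinitesimal statement to the claim that every flat connection \emph{near} $D$ is gauge-equivalent to one of the form $D+\omega$ with $\omega$ basic. You flag this as ``the main obstacle,'' but the remedy you sketch --- viewing $\Phi$ as a real-analytic map with identically vanishing differential and then ``integrating'' the first-order variation into an intertwining gauge transformation --- does not work as stated: the target of $\Phi$ is a bare set (or at best an infinite-dimensional quotient of basic structures by the gauge group, which is not a manifold), ${\mathcal M}^{s}_{flat}(SL_r)$ may be singular, and, most importantly, a statement about $H^{1}$ alone says nothing about the higher-order terms of the deformation. What closes the gap in the paper is the Kuranishi/DGLA formalism: using the formality results of \cite{Ka} one checks that the inclusion of DGLAs induces not only an isomorphism on $H^{1}$ but also an \emph{injection on} $H^{2}$ (the vanishing $H^{0}_{B}(M,\mathrm{End}_{0}(E))=0$ coming from simplicity is needed to kill the extra summand involving $\eta$), and the Goldman--Millson theorem \cite{GMi2} then upgrades this to a local isomorphism of Kuranishi spaces, i.e.\ to the statement that ${\mathcal M}^{s}_{Bflat}(E,D_{\xi})\to{\mathcal M}^{s}_{flat}(E)$ is an open embedding. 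The $H^{2}$ (obstruction) comparison is indispensable --- without it a deformation obstructed in the basic complex could become unobstructed in the full de Rham complex and the image would fail to be open --- and it is entirely absent from your proposal. Once the open embedding is in hand, disjointness of the strata (two strata that meet coincide, by transporting one fixed basic structure to the other with the gauge transformation realizing the intersection), their closedness, and the finiteness all follow just as you say.
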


We have the following corollary.
\begin{corollary}\label{Intco}
The set of isomorphism classes of basic vector  bundles of  rank $r$ over a compact Sasakian manifold $M$  induced by semi-simple flat bundles is finite.
\end{corollary}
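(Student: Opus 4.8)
The plan is to deduce the corollary from Theorem \ref{IMT} in two reduction steps: first passing from semi-simple to simple flat bundles, and then removing the determinant constraint, the latter being the substantive point. First I would reduce to the simple case. A semi-simple flat bundle $(E,D)$ of rank $r$ splits as a finite direct sum $(E,D)=\bigoplus_{i=1}^{k}(E_i,D_i)$ of simple flat bundles with $\sum_i \mathrm{rk}(E_i)=r$, and the induced basic structure is the direct sum $D_\xi=\bigoplus_i (D_i)_\xi$; hence the isomorphism class of $(E,D_\xi)$ is recorded by the multiset of the classes $(E_i,(D_i)_\xi)$. Since $r$ admits only finitely many partitions $r=r_1+\dots+r_k$, it suffices to show that for each rank $r'\le r$ the set $\mathcal S_{r'}$ of isomorphism classes of basic bundles induced by simple flat bundles of rank $r'$ with \emph{arbitrary} determinant is finite; the set of basic bundles coming from semi-simple bundles of rank $r$ then injects into the finite set of multisets drawn from the finite sets $\mathcal S_{r'}$.

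The key step is to remove the triviality-of-determinant hypothesis in Theorem \ref{IMT}, and the mechanism I would use is that \emph{tensoring by a flat line bundle does not change the induced basic bundle}. Indeed, on a compact Sasakian manifold every harmonic $1$-form $\alpha$ is basic, i.e. $\iota_\xi\alpha=0$ and $L_\xi\alpha=0$; choosing for a flat line bundle $(\mathcal L,\nabla)$ its harmonic representative, the connection form has vanishing contraction with $\xi$, so $\nabla_\xi$ is the trivial partial connection. Consequently, for any flat bundle $(E,D)$ and any flat line bundle $(\mathcal L,\nabla)$ one has $(E\otimes\mathcal L,D\otimes\nabla)_\xi\cong(E,D)_\xi$, so the induced basic bundle depends only on the class of $(E,D)$ modulo twisting by flat line bundles.

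I would then normalise the determinant. Writing $\mathrm{Hom}(\pi_1(M),\C^\ast)\cong(\C^\ast)^{b_1}\times \widehat T$ with $\widehat T$ finite, the subgroup of $r'$-th powers has finite index; hence for a simple $(E',D')$ of rank $r'$ with determinant $L$ one can choose a flat line bundle $\mathcal L$ so that $\det(E'\otimes\mathcal L)$ equals one of finitely many fixed flat line bundles $\ell_1,\dots,\ell_m$ of finite order (and equals the trivial bundle precisely when the class of $L$ in $\widehat T/\widehat T^{\,r'}$ is trivial). By the twist-invariance above, the basic bundle of $(E',D')$ coincides with that of $(E'\otimes\mathcal L,D'\otimes\nabla)$. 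Invoking Theorem \ref{IMT} together with its evident analogue for a fixed determinant of finite order—the proof being identical, the fixed-determinant condition again cutting out an open and closed locus—each $\ell_j$ contributes only finitely many basic bundles, and summing over the finitely many $\ell_j$ shows $\mathcal S_{r'}$ is finite. With the first reduction this yields the corollary.

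The hard part is the determinant step. Its conceptual heart is the twist-invariance of the basic structure, which rests on the Sasakian Hodge-theoretic fact that harmonic forms on a compact Sasakian manifold are basic; once this is in place, the only remaining difficulty is the purely combinatorial bookkeeping for the finite torsion obstruction in $\widehat T/\widehat T^{\,r'}$ that prevents normalising $\det$ all the way to the trivial bundle.
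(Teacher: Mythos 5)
Your argument is essentially the paper's own proof, which is stated in one line: the rank-one case is exactly the Hodge-theoretic fact $H^{1}(M,\C)\cong H^{1}_{B}(M)$ that you invoke, and the higher-rank case is obtained ``inductively'' from Corollary~\ref{fidi}, i.e.\ precisely by your two reductions (semi-simple to simple via direct sums over partitions of $r$, and arbitrary determinant to one of finitely many finite-order determinants via twisting by flat line bundles). The one overstatement is the claimed isomorphism $(E\otimes\mathcal L, D\otimes\nabla)_{\xi}\cong (E,D)_{\xi}$, which fails when $\mathcal L$ is not smoothly trivial (the underlying smooth bundles already differ, since flat line bundles may have nonzero torsion Chern class); this does not affect the conclusion, because twisting alters the induced basic bundle only by tensoring with one of the finitely many basic line bundles arising from flat line bundles --- which is again the $r=1$ case --- so finiteness survives.
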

In case $r=1$, the statement follows from the Hodge theoretical fact that the first de Rham cohomology of a compact Sasakian manifold is isomorphic to the first basic cohomology.
Thus the higher rank case can be seen as  a non-abelian analogue of this fact.

By Theorem \ref{IMT}, we can study the singularity of ${\mathcal M}^{s}_{flat }(SL_r)$ by the second  basic cohomology instead of the second de Rham cohomology.
We have the following smoothness result.

\begin{corollary}
If $\dim M=3$, then ${\mathcal M}^{s}_{flat }(SL_r)$ is a smooth complex manifold.
\end{corollary}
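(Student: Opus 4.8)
The plan is to derive the smoothness from Theorem~\ref{IMT} by analysing the deformation theory of each piece of the decomposition. First I would note that ${\mathcal M}^{s}_{flat}(SL_r)$ is a priori a complex analytic space, being (a union of components of) the character variety of $M$ into the complex group $SL_r(\C)$; what has to be proved is that it is smooth, and then the natural complex structure makes it a complex manifold. Since Theorem~\ref{IMT} exhibits it as the finite disjoint union $\bigcup_{(E,D_\xi)}{\mathcal M}^{s}_{Bflat}(E,D_\xi)$ of open and closed subsets, it suffices to prove that each ${\mathcal M}^{s}_{Bflat}(E,D_\xi)$ is smooth. As the remark after Theorem~\ref{IMT} indicates, the effect of fixing the basic structure $D_\xi$ is that the deformation theory of this piece is modelled on the \emph{basic} de Rham complex of $M$ with coefficients in the flat bundle $\mathfrak{sl}(E)=\mathrm{End}_0(E)$: the Zariski tangent space at $[D]$ is $H^1_B(M,\mathfrak{sl}(E))$ and the obstruction to integrating a first-order deformation lies in $H^2_B(M,\mathfrak{sl}(E))$, the quadratic obstruction being the cup product composed with the Lie bracket.

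The key step is the vanishing of this obstruction space in dimension $3$. Writing $\dim M=2n+1$, the hypothesis forces $n=1$, so the Reeb foliation has transverse Kähler geometry of complex dimension $1$ and real transverse dimension $2$; morally this is the situation of a compact Riemann surface, where simplicity already forces $H^2=0$. Because a compact Sasakian manifold is taut (the Reeb foliation is homologically orientable and carries a basic transverse volume form), basic harmonic theory applies and gives transverse Poincaré duality $H^k_B(M,\mathfrak{sl}(E))\cong H^{2-k}_B(M,\mathfrak{sl}(E))^{*}$ for the harmonic-bundle coefficients $\mathfrak{sl}(E)$ (self-dual via the Killing form). Taking $k=2$ yields $H^2_B(M,\mathfrak{sl}(E))\cong H^0_B(M,\mathfrak{sl}(E))^{*}$. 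Now $H^0_B(M,\mathfrak{sl}(E))$ consists of the global $D$-parallel traceless endomorphisms of $E$; simplicity of $(E,D)$ makes its only parallel endomorphisms the scalars, and the trace-free condition then gives $H^0_B(M,\mathfrak{sl}(E))=0$. Hence $H^2_B(M,\mathfrak{sl}(E))=0$, the deformation problem is unobstructed, and ${\mathcal M}^{s}_{Bflat}(E,D_\xi)$ is a smooth complex manifold of dimension $\dim_{\C}H^1_B(M,\mathfrak{sl}(E))$.

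To close the argument I would check consistency with the open and closed statement of Theorem~\ref{IMT}: the twisted Gysin sequence of the Reeb foliation, namely $H^{-1}_B\to H^1_B(M,\mathfrak{sl}(E))\to H^1_{dR}(M,\mathfrak{sl}(E))\to H^0_B(M,\mathfrak{sl}(E))\xrightarrow{\,\cup[d\eta]\,}H^2_B(M,\mathfrak{sl}(E))$, together with $H^0_B(M,\mathfrak{sl}(E))=0$, gives $H^1_B(M,\mathfrak{sl}(E))\cong H^1_{dR}(M,\mathfrak{sl}(E))$, so the basic computation of the tangent space agrees with the ambient one, as it must since the piece is open. The complex structure may be described either intrinsically, as the character-variety structure for $SL_r(\C)$, or through the transverse Kähler structure, which identifies this moduli with the moduli of stable basic Higgs bundles via the correspondence of \cite{BK,BK2}. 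I expect the main obstacle to be technical: one must set up the twisted basic deformation complex precisely and establish the flat-coefficient version of transverse Poincaré duality, i.e.\ the basic Hodge and harmonic theory for $\mathfrak{sl}(E)$ in the Sasakian setting, on which the whole vanishing rests.
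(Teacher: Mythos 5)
Your proposal is correct and takes essentially the same route as the paper: reduce via the decomposition of Theorem \ref{IMT} to the deformation theory governed by the basic complex $A^{\ast}_{B}(M,\mathrm{End}_{0}(E))$, and then kill the obstruction space by the transverse Poincar\'e duality $H^{2}_{B}(M,\mathrm{End}_{0}(E))\cong H^{0}_{B}(M,\mathrm{End}_{0}(E))$ (the paper phrases this as the isomorphism given by the basic Hodge star $\star$ when $n=1$), the latter group vanishing by simplicity.
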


Define the moduli space ${\mathcal M}^{st}_{BHiggs }(E, D_{\xi})$ of  stable basic  Higgs bundles with fixed basic structure $D_{\xi}$.
The non-abelian Hodge correspondence on a compact Sasakian manifold at the level of moduli spaces can be described by the following:
\begin{theorem}
There is a canonical homeomorphism
${\mathcal M}^{s}_{Bflat }(E, D_{\xi}) \cong {\mathcal M}^{st}_{BHiggs }(E, D_{\xi})$.
\end{theorem}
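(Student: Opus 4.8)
The plan is to factor the desired homeomorphism through a moduli space of harmonic bundles with fixed basic structure $D_\xi$, following Simpson's strategy for the Kähler case but with transverse (basic) data in place of the ambient complex structure. The underlying bijection of sets is already provided by the category equivalence of \cite{BK, BK2}. Concretely, to a simple flat bundle $(E,D)$ with basic structure $D_\xi$ one attaches a harmonic metric $h$ and writes $D=d_h+\Psi_h$, where $d_h$ is the $h$-unitary part and $\Psi_h$ the $h$-self-adjoint part; the transverse $(0,1)$-component of $d_h$ defines a basic holomorphic structure $\bar\partial$ and the basic $(1,0)$-component of $\Psi_h$ defines a Higgs field $\theta$, yielding a stable basic Higgs bundle $(E,\bar\partial,\theta)$ with vanishing basic Chern classes. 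Conversely, the Hermitian-Einstein metric of such a Higgs bundle reconstructs the flat connection. The first step is therefore to verify that this correspondence is compatible with fixing $D_\xi$: that the basic structure underlying $(E,\bar\partial,\theta)$ is precisely the one determined by $D_\xi$, so that the bijection restricts to a bijection ${\mathcal M}^{s}_{Bflat}(E,D_\xi)\to{\mathcal M}^{st}_{BHiggs}(E,D_\xi)$.

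The second step is to upgrade this set-theoretic bijection to a homeomorphism. I would topologize ${\mathcal M}^{s}_{Bflat}(E,D_\xi)$ as a subspace of ${\mathcal M}^{s}_{flat}(SL_r)$ (which by Theorem \ref{IMT} is open and closed, so it inherits the natural gauge-quotient topology) and ${\mathcal M}^{st}_{BHiggs}(E,D_\xi)$ as a quotient of the space of basic Dolbeault data modulo complex gauge. Continuity of the map from flat bundles to Higgs bundles amounts to continuous dependence of the harmonic metric on the flat connection; continuity of the inverse amounts to continuous dependence of the Hermitian-Einstein metric on the Higgs data. In both cases uniqueness of the solution (up to the residual symmetry controlled by simplicity and stability) together with a compactness argument of Uhlenbeck type shows that the gauge class of the metric varies continuously with the parameter, and this descends to the quotients. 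A continuous bijection that is moreover proper is closed, and a continuous closed bijection is a homeomorphism.

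The main obstacle is the analytic continuity statement in the transverse, foliated setting. The relevant operators are the basic Laplacians, which are only transversally elliptic rather than elliptic on the full $(2n+1)$-dimensional manifold, so the elliptic regularity, a priori estimates, and Uhlenbeck compactness used in the Kähler case must be replaced by their basic (transverse Hodge-theoretic) analogues on the Sasakian manifold. Fixing $D_\xi$ is what makes this feasible: it freezes the behaviour along the Reeb field $\xi$ and confines the essential analysis to the transverse Kähler directions, where the Sasakian transverse Hodge theory supplies the needed estimates. The technical heart of the proof is thus to show that the existence results of \cite{BK, BK2} for harmonic and Hermitian-Einstein metrics hold in families with continuous dependence on parameters, uniformly in the basic Sobolev norms adapted to the foliation.
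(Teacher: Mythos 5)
Your skeleton is the right one --- the set-theoretic bijection comes from the category equivalence of \cite{BK, BK2}, the compatibility with the fixed basic structure $D_{\xi}$ follows from the basicness of harmonic metrics (Theorem \ref{t4.2}), and the homeomorphism is factored through a moduli space of harmonic bundles with fixed $D_{\xi}$, exactly as in the paper. But the way you propose to carry out the topological step has a genuine gap. The paper does not prove any ``continuous dependence of the harmonic (or Hermitian--Einstein) metric on the data'' in families; instead it fixes one harmonic metric $h$ once and for all and observes that the assignment ${\mathcal I}:\alpha+\beta\mapsto \alpha^{0,1}+\beta^{1,0}$ is a \emph{linear} isomorphism of affine spaces, equivariant under the basic unitary gauge group $SU_{B}(E)$, carrying $\mathcal A^{s}_{Bharm}(E,D_{\xi})$ onto the locus $\mathcal A_{BHiggsharm}(E,D_{\xi})$ of stable basic Higgs structures for which $h$ is the pluriharmonic metric. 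Both moduli spaces are then quotients of homeomorphic spaces by the same group, and Theorem \ref{harmonicmetric} (existence and uniqueness of the pluriharmonic metric) is used only to show that every $SL_{B}(E)$-orbit of stable Higgs data meets this locus in exactly one $SU_{B}(E)$-orbit; no hard analytic continuity-in-parameters statement is needed for this leg.

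By contrast, your plan rests on ``uniqueness plus a compactness argument of Uhlenbeck type'' to get continuity of the metric in the parameter, and on an unproved properness claim to close the argument. The first of these is precisely where the foliated setting bites: as the paper records in Remark \ref{remcom}, Uhlenbeck compactness produces unitary gauge transformations that need not be basic, so the limiting object need not carry the fixed basic structure $D_{\xi}$; recovering compatibility with $D_{\xi}$ requires the finiteness statement of Corollary \ref{finiss} and is the content of Section 4, not an off-the-shelf tool available at this stage. If you wish to avoid the linear-isomorphism route, continuity of the harmonic metric in the flat connection should instead come from the implicit function theorem (invertibility of the linearized harmonicity equation transverse to the gauge orbit, guaranteed by simplicity), which stays within the basic Sobolev spaces; and the concluding ``a proper continuous bijection is a homeomorphism'' step should be replaced by exhibiting a continuous inverse directly, since properness of your bijection is nowhere established.
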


In this paper we mainly consider analytic structures of moduli spaces of  stable objects (simple flat bundles and stable basic Higgs bundles).
Only the algebraic structure we use is a structure of a quasi-projective variety on  ${\mathcal M}^{s}_{flat }(SL_r)$ identified with the quotient of stable points in the representation variety.
Studying algebraic structures on moduli space of basic Higgs bundles and extending to semi-stable objects will be left for the future.
In this paper we only discuss fundamental properties toward such purpose. 
For connecting analytic structures with  algebraic structures and for  extending stable objects to semi-stable objects, 
certain compactness property given in \cite[Lemma 2.8]{Si2} is an important tool.
But, for our case, we have a difficulty giving  similar arguments of Simpson's proof due to the   basicness (see  Proposition \ref{com} and Remark \ref{remcom}).
Applying Corollary \ref{Intco}, we can say that the limit up to unitary gauge transformation preserves fixed basic structures up to isomorphisms and 
we give an analogue  of Simpson's compactness which is compatible with the basic structures (Corollary \ref{Bcom}).
By this, we can obtain a Sasakian analogue of Hitchin's properness theorem.

\begin{corollary}\label{prop}
If every semi-simple flat bundle  with fixed basic structure $D_{\xi}$ is simple, then the map $\sigma: {\mathcal M}^{st}_{BHiggs }(E, D_{\xi})\to \bigoplus ^{{\rm rank} E} _{i=2}H^{0,0}_{B}(M, Sym^{i} T^{1,0\ast})$  defined by   the coefficients of the characteristic polynomial of Higgs fields  is proper.
\end{corollary}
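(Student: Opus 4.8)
The plan is to prove properness through the sequential criterion: given a sequence $(E,\bar\partial_\alpha,\theta_\alpha)$ of stable basic Higgs bundles representing points of ${\mathcal M}^{st}_{BHiggs}(E,D_\xi)$ whose images $\sigma(E,\bar\partial_\alpha,\theta_\alpha)$ converge in the finite-dimensional space $\bigoplus_{i=2}^{\mathrm{rank}\,E}H^{0,0}_B(M,Sym^iT^{1,0\ast})$, I must produce a subsequence converging in ${\mathcal M}^{st}_{BHiggs}(E,D_\xi)$. First I would fix a background Hermitian metric and, using the Hermitian--Einstein metrics furnished by the correspondence of \cite{BK,BK2}, pass to harmonic representatives, so that the sequence satisfies a uniform elliptic system whose zeroth-order data, namely the coefficients of the characteristic polynomial of $\theta_\alpha$, are bounded by the convergence hypothesis.

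The core step is the Sasakian compactness statement of Corollary \ref{Bcom}, the basic-compatible analogue of Simpson's \cite[Lemma 2.8]{Si2}. Applying it to the sequence above, I would extract a subsequence which, after unitary gauge transformations, converges to a limiting basic Higgs bundle. The delicate issue is that such a limit is a priori only polystable, corresponding to a semi-simple rather than a simple flat bundle, and that a naive limit has no reason to retain the prescribed basic structure $D_\xi$. At this point I would invoke Corollary \ref{Intco}: since only finitely many isomorphism classes of basic bundles of rank $r$ arise from semi-simple flat bundles, the unitary-gauge limit is constrained to preserve $D_\xi$ up to isomorphism, which is exactly what Corollary \ref{Bcom} records.

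Finally I would upgrade polystability to stability. The limit is a polystable basic Higgs bundle with basic structure $D_\xi$, hence corresponds to a semi-simple flat bundle with the same fixed basic structure; the standing hypothesis that every such flat bundle is simple then forces the limit to be stable, so it defines a genuine point of ${\mathcal M}^{st}_{BHiggs}(E,D_\xi)$. This shows the chosen subsequence converges inside the moduli space, and since the image sequence lay in an arbitrary compact subset of the target and the relevant moduli spaces are metrizable, the preimage of any compact set is (sequentially) compact, i.e.\ $\sigma$ is proper.

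I expect the principal obstacle to be precisely the clash between Simpson's compactness argument and the basicness constraint flagged in Proposition \ref{com} and Remark \ref{remcom}: without additional input, the unitary-gauge limit of a sequence of basic Higgs bundles need not remain basic in the same isomorphism class. Overcoming this is what forces the two-stage strategy above, first using the finiteness of basic structures (Corollary \ref{Intco}) to lock the limit into the correct class and then using the simplicity hypothesis to promote the possibly strictly polystable limit to a stable one, rather than directly transcribing the proof of Hitchin or Simpson.
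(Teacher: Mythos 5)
Your proposal is correct and follows essentially the same route as the paper: pass to harmonic representatives via ${\mathcal M}^{st}_{BHiggs}(E,D_{\xi})\cong \mathcal M^{s}_{Bharm}(E,D_{\xi})$, use the boundedness of the characteristic coefficients to invoke the Simpson-type compactness of Proposition \ref{com}, use Corollary \ref{Bcom} (resting on the finiteness statement) to pin the limiting basic structure to $D_{\xi}$, and use the simplicity hypothesis to rule out a strictly semi-simple limit. The only cosmetic difference is that the extraction of the convergent subsequence is the content of Proposition \ref{com} rather than of Corollary \ref{Bcom}, which you otherwise describe accurately.
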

By this, we give explicit examples of  proper $\sigma: {\mathcal M}^{st}_{BHiggs }(E, D_{\xi})\to \bigoplus ^{{\rm rank} E} _{i=2}H^{0,0}_{B}(M, Sym^{i} T^{1,0\ast})$ in case $\dim M=3$ as an analogue of Hitchin's basic example \cite[Theorem 8.1]{Hit}.

\noindent {\bf Acknowledgement.} The author thanks  Dr. Takashi Ono for  valuable discussions  about  moduli spaces  which inspire  to  start  this project. 

\section{Non-abelian Hodge correspondence on compact Sasakian manifolds}
\subsection{Sasakian manifolds}

Let $M$ be a $(2n+1)$-dimensional real $C^\infty$ orientable manifold. 
A {\em CR-structure} on $M$ is an 
$n$-dimensional complex sub-bundle $T^{1,0}M$ of $TM_{\mathbb C}$ such that
$T^{1,0}M\cap T^{0,1}M
\,=\,\{0\}$ and $T^{1,0}M$ is involutive with respect to the Lie bracket
where $T^{0,1}M$ is the complex conjugation $\overline{T^{1,0}M}$  of $T^{1,0}M$.
 A {\em contact CR-structure} is a CR-structure  $T^{1,0}M$ equipped  with a contact 
$1$-form $\eta$ on $M$ such that $\ker\eta\,=(T^{1,0}M\oplus T^{0,1}M)_{\R}$.
A contact CR-structure  is a {\em 
strongly pseudo-convex CR-manifold} if 
 the Hermitian form $L_{\eta}$ on $\ker\eta$ defined by 
$L_{\eta}(X,Y)=d\eta(X,IY)$ for $ X,Y\in \ker\eta$
is positive definite for every point $x\,\in\, M$ where $I$ is the almost complex structure on  $\ker\eta$ defined by $T^{1,0}M$. 
A {\em Sasakian structure} is a strongly pseudo-convex CR-structure  $(T^{1,0}M, \eta)$ such that
  the Reeb vector field $\xi$ for the 
contact form $\eta$ is  an infinitesimal  automorphism  of the CR structure $T^{1,0}M$. 
The Sasakian metric $g_{\eta}$ associated with a Sasakian structure  $(T^{1,0}M, \eta)$ is 
a Riemannian  metric $g_{\eta}$  on $M$  such that the direct sum $TM= \ker\eta\oplus \langle \xi\rangle $ is orthogonal, the restriction of  $g_{\eta}$   on $\ker\eta$ is  $L_{\eta}$ and $g_{\eta}(\xi,\xi)=1$.

\subsection{Hodge theory}\label{subshod}

Let $(M, T^{1,0}M,\eta)$ be a compact Sasakian manifold
of dimension $2n+1$.
Denote by $\xi$ the Reeb vector field for the 
contact form $\eta$. 
The basic de Rham complex $A^{\ast}_{B}(M)$ is defined by the sub-complex of the de Rham complex $A^{\ast}(M)$ as
$A^{\ast}_{B}(M)=\{\alpha\in A^{\ast}(M)\vert \iota_{\xi}\alpha=\iota_{\xi}d\alpha=0\}$.
Since $(\xi, T^{1,0}M)$ defines a transverse holomorphic foliation, we have a canonical double complex structure $(A^{\ast,\ast}_{B}(M), \partial_{B}, \overline\partial_{B})$ like the usual Dolbeault complex on complex manifolds.

For the usual Hodge star operator $\ast: A^{r}(M)\to A^{2n+1-r}(M)$
associated to the Sasakian metric $g_{\eta}$, 
we define the basic Hodge star operator $\star:A^{r}_{B}(M)_{\mathbb C}
\to  A^{2n-r}_{B}(M)_{\mathbb C}$ to be
$\star\omega\,=\,\ast(\eta\wedge \omega)$ for $\omega\,\in\, A^{r}_{B}(M)_{\mathbb C}$.
Define the Hermitian $L^2$-product
\[A^{r}_{B}(M)_{\mathbb C}\times A^{r}_{B}(M)_{\mathbb C}\,\ni\,
(\alpha,\,\beta)\,\longmapsto\, \int_{M} \eta\wedge\alpha\wedge \star\overline{\beta}
\, \in\, {\mathbb C}\, ,
\]
and we define
 $d^{\star}:A^{r}_{B}(M)\to
A^{r-1}_{B}(M)$,
  $\partial_{B}^{\star}:A^{p,q}_{B}(M)\to A^{p-1,q}_{B}(M)$,
 $\overline\partial_{B}^{\star}:A^{p,q}_{B}(M)\to  A^{p,q-1}_{B}(M)$ and 
 $\Lambda: A^{p,q}_{B}(M)
\to A^{p-1,q-1}_{B}(M)$ 
to be the formal adjoints of $d:A^{r}_{B}(M)\to  A^{r+1}_{B}(M)$,\, $\overline\partial_{B}:A^{p,q}_{B}(M)
\to A^{p+1,q}_{B}(M)$,\, $\partial_{B}:A^{p,q}_{B}(M)
\to A^{p,q+1}_{B}(M)$ and $$L
:A^{p,q}_{B}(M)\ni x\mapsto d\eta\wedge x\in A^{p+1,q+1}_{B}(M)$$ respectively.
We notice that the usual co-differential $d^{\ast}=-\ast d \ast $ is not restricted to  $d^{\star}$ on $A^{r}_{B}(M)$ in general.
But we have $d^{\ast}=d^{\star} $ on the basic $1$-forms $A^{1}_{B}(M)$ (see \cite{KT}).
Thus the inclusion $A^{\ast}_{B}(M) \subset A^{\ast}(M)$ induces an injection $H^1_{B}(M)\hookrightarrow H^1(M,\C)$.
Moreover we know that this injection is actually an isomorphism (see \cite{BG}).

We have the K\"ahler identities
\[[\Lambda, \,\partial_{B}]\,=\,
-\sqrt{-1}\,\overline\partial_{B}^{\star}\ \ \text{ and } \ \
[\Lambda ,\,\overline\partial_{B}]\,=\,\sqrt{-1}\,\partial_{B}^{\star}
\]
which  imply the Hodge structure 
\[H^{r}_{B}(M)\,=\,
\bigoplus_{p+q=r} H^{p,q}_{B}(M), \qquad \overline{H^{p,q}_{B}(M)}=H^{q,p}_{B}(M)
\]
where $H^{p,q}_{B}(M)$ is the basic Dolbeault  cohomology i.e.  the cohomology of the bigraded complex  $(A^{\ast,\ast}_{B}(M),\overline\partial_{B})$ (see \cite{EKA}).

\subsection{Basic bundles}

A structure of basic vector bundle (or   basic structure) on a $C^\infty$ vector bundle $E$ is a 
linear
operator $\nabla_{\xi}\,:\, {\mathcal C}^{\infty}(E)
\,\longrightarrow\,
{\mathcal C}^{\infty}(E)$ such that
\[ \nabla_{\xi}(f s)\,=\,f\nabla_{\xi} s+ \xi(f) s.
\]
A differential form $\omega\,\in\, A^{\ast}(M,E)$ with 
values in $E$ is called basic if 
the equations
\[
\iota_{\xi}\omega=0=\nabla_{\xi} \omega
\]
hold where we extend $\nabla_{\xi}: A^{\ast}(M,E)\to A^{\ast}(M,E)$.
Let $A^{\ast}_{B}(M, E)\,\subset\,A^{\ast}(M,E)$ denote the 
subspace of basic forms in the space $A^{\ast}(M,E)$ of differential forms with values in 
$E$.
A Hermitian metric on $E$ is called {\em basic} if it is $\nabla_{\xi}$-parallel.
Unlike the usual Hermitian metric, a basic vector bundle may not admits a basic Hermitian metric in general.
For a connection operator $\nabla : A^{\ast}(M,E)\to A^{\ast+1}(M,E)$ such that the covariant derivative of $\nabla$ along $\xi$ is $\nabla_{\xi}$, the curvature $R^{\nabla}\in A^{2}(M,\, {\rm End} (E))$ is basic i.e. $R^{\nabla}\in A^{2}_{B}(M, {\rm End} (E))$.
For the Chern forms $c_{i}(E,\nabla)\in A^{2i}(M)$ associated with $\nabla$, we have $c_{i}(E,\nabla)\in A_{B}^{2i}(M)$ and we define the basic Chern classes $c_{i,B}(E)\in H^{2i}_{B}(M)$ by the cohomology classes of $c_{i}(E,\nabla)$ in the basic cohomology.

A structure of basic holomorphic bundle (or basic holomorphic  structure) on a $C^\infty$ vector bundle $E$ is a flat  partial  connection defined  on   $\langle \xi\rangle \oplus T^{0,1}$ as in  \cite{Ra}, i.e. 
linear
differential operator $\nabla^{\prime\prime}\,:\, {\mathcal C}^{\infty}(E)
\,\longrightarrow\,
{\mathcal C}^{\infty}(E\otimes ( \langle \xi\rangle \oplus T^{0,1}M)^{\ast})$ such that
\begin{itemize}
\item for any $X\in \langle \xi\rangle \oplus T^{0,1}M$, and any smooth function $f$ on $M$, the equation
\[ \nabla^{\prime\prime}_{X}(f s)\,=\,f\nabla^{\prime\prime}_{X} s+ X(f) s
\]
holds for all smooth sections $s$ of $E$, and
\item if we extend $\nabla^{\prime\prime}$ to $\nabla^{\prime\prime}\,:\, {\mathcal C}^{\infty}(E\otimes \bigwedge^{k} (\langle \xi\rangle \oplus T^{0,1})^{\ast}) \, \longrightarrow\, {\mathcal C}^{\infty}(E\otimes 
\bigwedge^{k+1} (\langle \xi\rangle \oplus T^{0,1}M)^{\ast})$, then $\nabla^{\prime\prime}\circ \nabla^{\prime\prime}\,=\,0$.
\end{itemize}

A basic holomorphic vector bundle $E$ has a canonical basic bundle structure corresponding to the covariant derivative $\nabla^{\prime\prime}_{\xi}$.
$\nabla^{\prime\prime}$ defines the linear operator $\bar\partial_{E}: A^{p,q}_{B}(M,E)\to 
A^{p,q+1}_{B}(M, E)$ so that 
$\bar \partial_{E}( f\omega )=\bar\partial _{B}f \wedge \omega +f \bar\partial_{E}( \omega )$ for $f\in A^{0}_{B}(M), \omega\in A^{p,q}_{B}(M, E)$.
  $(A^{\ast,\ast}_{B}(M,E),\overline\partial_{E})$ is a bigraded complex and we  define the Dolbeault cohomology $H^{\ast,\ast}_{B}(M,E)$ of a basic holomorphic vector bundle $E$ by the cohomology of this complex.
If a basic holomorphic vector bundle $E$ admits a basic Hermitian metric $h$, as complex case, we have a unique unitary connection $\nabla^{h}$ such that for any $X\in \langle \xi\rangle \oplus T^{0,1}M$, $\nabla^{h}_{X}=\nabla^{\prime\prime}_{X}$.
We notice that the bundle $T^{1,0}M$ admits a canonical basic holomorphic structure  such that the unitary  connection associated with the Hermitian metric $L_{\eta}$ is the   Tanaka-Webster connection \cite{Tan, Web} (see \cite{KM}).

For  a  flat vector bundle $(E,D)$ over $M$, we consider a basic structure $D_{\xi}$.
$A^{\ast}_{B}(M,E)$ is a sub-complex of the de Rham 
complex $A^{\ast}(M, E)$ equipped with the differential $D$.
Denote by $H^{\ast}(M,E) $ and $H^{\ast}_{B}(M,E) $ the cohomology of the de Rham complex $A^{\ast}(M,\,E)$ and the
cohomology of the sub-complex $A^{\ast}_{B}(M, E)$ respectively. 
Let $h$ be a (not necessarily basic) Hermitian metric on $E$.
It gives a unique decomposition
$D= \nabla+\phi$
such that $\nabla$ is a unitary connection for $h$ and $\phi$ is a $1$-form on $M$ with values in
the self-adjoint endomorphisms of $E$ with respect to $h$. 
The Hermitian metric
$h$ is called {\em harmonic} if $\nabla^{\ast}\phi=0$ where  
$\nabla^{\ast}$ is  the formal adjoint operator of $\nabla$ with respect to $g$ and $h$. 
Corlette \cite{Cor} proves that  if the flat bundle 
$(E,D)$ is simple, then   there exists a harmonic  metric $h$ which is  unique up to a positive constant.

Assume that the Hermitian metric $h$ is basic.
This condition is equivalent to the condition that $\phi(\xi)\,=\,0$ (see \cite[Proposition 4.1]{BK}).
Then  $\nabla$ is defined on  $ A^{\ast}_{B}(M,E)$.
Decompose the connection $\nabla$ as $\nabla =\partial_{E} +\overline\partial_{E}$
such that  $\partial_{E}:\,A^{p,q}_{B}(M,E)\to
A^{p+1,q}_{B}(M,E)$ and $\overline\partial_{E}:\,A^{p,q}_{B}(M,E)\to A^{p,q+1}_{B}(M,E)$, and also
decompose $\phi$ as 
$\phi\,=\,\theta+\overline\theta$  such that 
$\theta\,\in\, A^{1,0}_{B}(M,\,{\rm End}(E))$ and $ 
\overline\theta\in A^{0,1}_{B}(M,\,{\rm End}(E))$.

\begin{theorem}[{\cite[Theorem 4.2]{BK}}]\label{t4.2}
Let $(M, T^{1,0}M,\eta)$ be a compact Sasakian manifold and
$(E, D)$ a flat complex vector bundle over $M$ with a Hermitian metric $h$.
Then the following two conditions are equivalent:
\begin{itemize}
\item The Hermitian structure $h$ is harmonic, i.e. $(\nabla)^{\ast}\phi\,=\,0$.

\item The Hermitian structure $h$ is basic, and the equations 
\[\overline\partial_{E} \overline\partial_{E}=\,0,\qquad 
\theta\wedge\theta=\,0\qquad {\rm and} \qquad 
\overline\partial_{{\rm End}(E)} \theta\,=\,0\, \]
hold.
\end{itemize}
\end{theorem}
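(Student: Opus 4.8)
The plan is to separate the universal consequences of flatness from the single step where Sasakian geometry genuinely intervenes. First I would record what flatness gives for an \emph{arbitrary} Hermitian metric $h$. Writing $D=\nabla+\phi$ with $\nabla$ unitary and $\phi$ self-adjoint-valued, we have
\[
0=D^{2}=R^{\nabla}+d^{\nabla}\phi+\phi\wedge\phi .
\]
Here $R^{\nabla}$ and $\phi\wedge\phi$ take values in skew-Hermitian endomorphisms while $d^{\nabla}\phi$ is self-adjoint-valued, so splitting into self-adjoint and skew parts yields
\[
d^{\nabla}\phi=0,\qquad R^{\nabla}+\phi\wedge\phi=0,
\]
with no assumption on $h$. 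Decomposing $\nabla=\partial_{E}+\overline\partial_{E}$, $\phi=\theta+\overline\theta$ and setting $D''=\overline\partial_{E}+\theta$, a type count gives $(D'')^{2}=\overline\partial_{E}^{2}+\overline\partial_{E}\theta+\theta\wedge\theta$ with summands of types $(0,2),(1,1),(2,0)$; hence the three equations in the second condition are precisely $(D'')^{2}=0$.

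Next I would dispose of both implications that do not involve basicness at once. Suppose $h$ is basic. Then $\nabla,\phi,\theta,\overline\theta$ all preserve $A^{\ast}_{B}(M,E)$, the form $d\eta$ is the transverse K\"ahler form, and the basic K\"ahler identities of Subsection~\ref{subshod} hold verbatim in the twisted setting. In this transverse-K\"ahler situation the equivalence ``$h$ harmonic $\Longleftrightarrow(D'')^{2}=0$'' is exactly Simpson's characterization of harmonic bundles \cite{Si1, Si2}, whose proof transports word for word once the basic identities are available. Concretely, for the reverse direction one computes $\nabla^{\ast}\phi=\partial_{E}^{\ast}\theta+\overline\partial_{E}^{\ast}\overline\theta$ (the cross terms vanish by type), and the K\"ahler identity $\partial_{E}^{\ast}=-\sqrt{-1}[\Lambda,\overline\partial_{E}]$ together with $\Lambda\theta=0$ and $\overline\partial_{E}\theta=0$ forces $\partial_{E}^{\ast}\theta=0$, the conjugate computation giving $\overline\partial_{E}^{\ast}\overline\theta=0$; the forward direction is the usual Weitzenb\"ock argument built from the same identities and the relations $d^{\nabla}\phi=0$, $R^{\nabla}+\phi\wedge\phi=0$.

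It remains to prove the one step where the Sasakian geometry really enters, and I expect this to be the main obstacle: \emph{a harmonic metric is automatically basic}, i.e.\ $\phi(\xi)=0$. The plan is to exploit the Reeb flow $\{\varphi_{t}\}$, which acts on $M$ by isometries of $g_{\eta}$ preserving the CR-structure. I would lift $\varphi_{t}$ to a bundle automorphism $\widetilde{\varphi}_{t}$ of $(E,D)$ by $D$-parallel transport along Reeb orbits; this lift is generated by the basic operator $D_{\xi}=\iota_{\xi}D$ and satisfies $\widetilde{\varphi}_{t}^{\ast}D=D$. Since $\varphi_{t}\in\mathrm{Isom}(M,g_{\eta})$ and $\widetilde{\varphi}_{t}\in\mathrm{Aut}(E,D)$, the harmonic equation $\nabla^{\ast}\phi=0$ is preserved, so $\widetilde{\varphi}_{t}^{\ast}h$ is again harmonic. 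For simple $(E,D)$ Corlette's uniqueness \cite{Cor} then gives $\widetilde{\varphi}_{t}^{\ast}h=c(t)\,h$ with $c(t)>0$ constant, and in the trivial-determinant setting the induced metric on the determinant line is preserved, forcing $c(t)^{r}=1$ and hence $c(t)\equiv 1$. Thus $h$ is $D_{\xi}$-parallel; writing $D_{\xi}=\nabla_{\xi}+\phi(\xi)$ with $\nabla_{\xi}$ skew-Hermitian and $\phi(\xi)$ self-adjoint, parallelism forces the self-adjoint part to vanish, i.e.\ $\phi(\xi)=0$, which is basicness by \cite[Proposition~4.1]{BK}. The general semisimple case follows by applying this to each simple summand. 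The delicate points to secure are the smoothness of the lift for a possibly irregular Reeb flow, its exact compatibility with $D$, and the invariance of the harmonic equation under $(\varphi_{t},\widetilde{\varphi}_{t})$; once basicness is in hand, the transverse-K\"ahler equivalence of the second paragraph completes the proof.
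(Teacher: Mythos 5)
This theorem is quoted from \cite[Theorem 4.2]{BK}; the present paper contains no proof of it, and the proof in \cite{BK} is a direct Bochner-type computation (an adaptation of Corlette's K\"ahler argument to the transverse K\"ahler structure) which establishes $\phi(\xi)=0$ and the three pluriharmonicity equations simultaneously from $\nabla^{\ast}\phi=0$, for an \emph{arbitrary} flat bundle carrying a harmonic metric. Your first two paragraphs are sound and agree in substance with that route: the splitting of $D^{2}=0$ into $d^{\nabla}\phi=0$ and $R^{\nabla}+\phi\wedge\phi=0$, the identification of the three equations with $(D'')^{2}=0$, and the transverse-K\"ahler argument once basicness is known, are all correct (one small point you gloss over: the harmonic equation involves the full $(2n+1)$-dimensional adjoint $\nabla^{\ast}$, so you must invoke $\nabla^{\ast}=\nabla^{\star}$ on basic $1$-forms, recorded in Subsection~\ref{subshod} from \cite{KT}, before the basic K\"ahler identities apply).

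The genuine gap is in your third step, precisely where you predicted trouble. The theorem is stated for an arbitrary flat bundle $(E,D)$ with an arbitrary Hermitian metric; it assumes neither simplicity nor trivial determinant. Your argument that a harmonic metric is basic uses Corlette's uniqueness-up-to-scalar, which holds only for simple $(E,D)$, and your step $c(t)\equiv 1$ uses triviality of the determinant. Neither hypothesis is available. The reduction ``apply this to each simple summand'' does not close the gap: (i) you would first need to know that $h$ decomposes as an $h$-orthogonal direct sum of harmonic metrics on flat simple subbundles, a fact usually extracted from pluriharmonicity itself (so the argument risks circularity); and (ii) on an isotypic block $V\otimes W$ with $\dim W>1$ the harmonic metric is unique only up to the automorphism group $GL(W)$, so the flow equivariance yields $\widetilde{\varphi}_{t}^{\ast}h=h(A_{t}\cdot,A_{t}\cdot)$ with $A_{t}$ a flat automorphism, and one only concludes that $\phi(\xi)$ equals the self-adjoint part of a flat endomorphism --- not that it vanishes. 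Even in the simple case, removing the trivial-determinant assumption requires the rank-one input that harmonic $1$-forms on a compact Sasakian manifold are basic (to kill the exponent $\lambda$ in $c(t)=e^{\lambda t}$, since the Reeb flow need not be periodic); this is true but is exactly the kind of Sasakian Hodge-theoretic fact the direct computation in \cite{BK} proves in all ranks at once. So your symmetry argument gives a clean alternative proof for simple bundles with trivial determinant, but it does not prove the theorem as stated; the computational route of \cite{BK} is needed (or a substantial repair of the semisimple reduction) for the general case.
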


For a harmonic metric $h$, since $h$ is basic, we define the
basic Hodge star operator $\star:A^{r}_{B}(M, E)
\to A^{2n-r}_{B}(M,E^\ast)$, the Hermitian $L^2$-product on $A^{\ast}_{B}(M, E)$ and the formal adjoint operator
$D^{\star}$ of $D$
 in the same manner as the trivial coefficient.
 We have $D^{\ast}=D^{\star}$ on the basic $1$-forms $A^{1}_{B}(M,E)$ as  $d^{\ast}=d^{\star}$ on $A^{1}_{B}(M)$.
We define $D^{\prime}=\partial_{E}+\overline\theta$, $D^{\prime\prime}= \overline\partial_{E}+\theta$ and $D^{c}=\sqrt{-1}(D^{\prime\prime}-D^{\prime})$.

A \textit{basic Higgs bundle} over $M$ is a pair
$(E, \theta)$ consisting of a basic holomorphic vector bundle $E$
and 
$\theta\,\in\, A^{1,0}_{B}(M,\,{\rm 
End}(E))$ satisfying the following two conditions:
$$\overline\partial_{{\rm 
End}(E)}\theta \,=\,0\ \ \text{ and }
\ \ \theta\wedge \theta\,=\,0\, .$$

We define the {\it degree} of a basic holomorphic vector bundle $E$ by 
$${\rm deg}(E)\,:=\,\int_{M}c_{1, B}(E)\wedge (d\eta)^{n-1}\wedge\eta\, .$$
Denote by ${\mathcal O}_{B}$ the sheaf of basic holomorphic functions on $M$, and for a holomorphic vector bundle $E$ on $M$, denote by 
${\mathcal O}_{B}(E)$ the sheaf of basic  holomorphic sections of $E$. 
Consider ${\mathcal 
O}_{B}(E)$ as a coherent ${\mathcal O}_{B}$-sheaf.

For a basic Higgs 
bundle $(E, \theta)$, a {\em sub-Higgs sheaf} of $(E,\theta)$ is a coherent 
${\mathcal O}_{B}$-subsheaf $\mathcal V$ of ${\mathcal O}_{B}(E)$ such that $\theta ({\mathcal V})\, \subset\, {\mathcal 
V}\otimes \Omega_{B}$, where $\Omega_{B}$ is the sheaf 
of basic holomorphic $1$-forms on $M$. By \cite[Proposition 3.21]{BH}, if ${\rm rk} 
(\mathcal V)\,<\,{\rm rk}(E)$ and ${\mathcal O}_{B}(E)/\mathcal V$ is 
torsion-free, then there is a transversely analytic sub-variety $S\,\subset\, M$ of complex 
co-dimension at least 2 such that ${\mathcal V}\big\vert_{M\setminus S}$ is given by a basic holomorphic sub-bundle $V\,\subset\,
E\big\vert_{M\setminus S}$. The degree ${\rm deg}(\mathcal V)$ can be defined by integrating $c_{1, B}(V)\wedge (d\eta)^{n-1}\wedge\eta$
on this complement $M\setminus S$.

\begin{definition}
We say that a basic Higgs bundle $(E, \theta)$ is {\em stable} if $E$ admits a basic Hermitian metric and for every 
sub-Higgs sheaf ${\mathcal V}$ of $(E, \theta)$ such that ${\rm rk} (\mathcal V)\,<\,{\rm 
rk}(E)$ and ${\mathcal O}_{B}(E)/\mathcal V$ is torsion-free, 
the inequality
\[\frac{{\rm deg}(\mathcal V)}{{\rm rk} (\mathcal V)}\,<\,\frac{{\rm deg}(E)}{{\rm rk}(E)}
\]
holds.
\end{definition}
A basic  Higgs bundle $(E, \theta)$ coming from a simple flat bundle equipped with a harmonic metric is stable.
The converse is also true.

Let $(E, \theta)$ be a basic Higgs bundle over a compact Sasakian manifold $M$. 
For  a basic Hermitian metric $h$,
 define $\overline\theta_{h}\,\in\, A^{0,1}_{B}(M,\,{\rm End}(E))$ by
$
h(\theta (e_{1}),\, e_{2})\,=\,h(e_{1}, \,\overline\theta_{h} (e_{2}))
$
for all $e_{1},\,e_{2}\,\in\, E_x$ and all $x\, \in\, M$.
Define the canonical connection
$
D^{h}\,=\,\nabla^{h}+\theta+\overline\theta_{h}
$
on $E$.

\begin{theorem}[\cite{BK, BK2}]\label{harmonicmetric}
If a basic Higgs bundle $(E, \,\theta)$ is stable and satisfies
$$c_{1,B}(E)\,=\,0\qquad {\rm and} \qquad \int_{M} c_{2,B}(E)\wedge(d\eta)^{n-2}\wedge \eta\,=\,0,$$
then there exists a basic Hermitian metric $h$ so that the canonical connection $D^{h}$ is a simple  flat connection 
and such Hermitian metric is unique up to a positive constant.

\end{theorem}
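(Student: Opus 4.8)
The plan is to realize this as a transverse (basic) version of the Hitchin--Kobayashi correspondence for Higgs bundles, following Hitchin \cite{Hit} and Simpson \cite{Si1, Si2}, using the transverse K\"ahler (basic Hodge) structure of Subsection~\ref{subshod}. Writing $\nabla^{h}=\partial_{E}+\overline\partial_{E}$ for the Chern connection of a basic Hermitian metric $h$ and introducing the pseudo-curvature
\[
G^{h}\,:=\,R^{\nabla^{h}}+[\theta,\overline\theta_{h}]\,\in\,A^{1,1}_{B}(M,{\rm End}(E)),
\]
a decomposition of $(D^{h})^{2}$ by bidegree shows, using the basic holomorphicity $\overline\partial_{E}^{2}=0$ and the hypotheses $\overline\partial_{{\rm End}(E)}\theta=0$ and $\theta\wedge\theta=0$, that the only possibly nonzero components of the curvature of $D^{h}$ are the $(1,1)$-part $G^{h}$ and the $(2,0)$-part $\partial_{{\rm End}(E)}\theta$ together with its $h$-adjoint. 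Consequently $D^{h}$ is flat as soon as one produces a basic metric $h$ for which $G^{h}=0$ and $\partial_{{\rm End}(E)}\theta=0$, so the whole problem reduces to this analytic existence statement.

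First I would prove the existence of a basic Hermitian--Einstein metric, namely a basic $h$ solving $\sqrt{-1}\Lambda G^{h}=\lambda\cdot{\rm Id}_{E}$, where the Einstein constant $\lambda$ is a multiple of ${\rm deg}(E)/{\rm rk}(E)$ and hence vanishes because $c_{1,B}(E)=0$. For this one runs the Donaldson--Uhlenbeck--Yau argument in the basic category: either the Donaldson heat flow $h^{-1}\dot h=-(\sqrt{-1}\Lambda G^{h}-\lambda)$ starting from a basic initial metric, or the continuity method, with the transverse K\"ahler identities and basic Hodge theory of Subsection~\ref{subshod} supplying the needed elliptic estimates. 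Stability of $(E,\theta)$ is exactly what excludes the destabilizing limits obstructing convergence: any blow-up of the flow would produce a $\theta$-invariant basic coherent subsheaf of slope $\geq {\rm deg}(E)/{\rm rk}(E)$, contradicting the Definition above via the structure theory of sub-Higgs sheaves recalled before it.

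Second, I would upgrade $\sqrt{-1}\Lambda G^{h}=0$ to the vanishing of the full curvature of $D^{h}$ by a Chern--Weil computation in basic cohomology. The hypotheses give $\int_{M}\bigl(2c_{2,B}(E)-c_{1,B}(E)^{2}\bigr)\wedge(d\eta)^{n-2}\wedge\eta=0$, and for a Hermitian--Einstein Higgs bundle the basic Chern--Weil representative of this number equals, up to a positive constant, $\int_{M}\bigl(|G^{h}|^{2}+|\partial_{{\rm End}(E)}\theta|^{2}\bigr)\,(d\eta)^{n}\wedge\eta$ once the Einstein condition and the Hodge--Riemann bilinear relations for primitive basic $(1,1)$-forms are used to dispose of the trace part. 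Vanishing of the number then forces $G^{h}=0$ and $\partial_{{\rm End}(E)}\theta=0$, so by the first paragraph $D^{h}$ is a flat connection. That $D^{h}$ is simple follows from stability: a $D^{h}$-parallel endomorphism commutes with $\overline\partial_{E}$ and $\theta$, so each of its eigenspaces is a sub-Higgs sheaf of slope ${\rm deg}(E)/{\rm rk}(E)$; strict stability forces it to be $0$ or all of $E$, whence the endomorphism is scalar.

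Finally, uniqueness up to a positive constant follows from the convexity of the Donaldson functional along geodesics of basic metrics: two solutions are compared through the self-adjoint endomorphism $s=h_{1}^{-1}h_{2}$, for which a maximum-principle argument using $\sqrt{-1}\Lambda G^{h_{i}}=0$ forces $s$ to be $\nabla$-parallel and hence, by simplicity, a positive scalar; alternatively one may invoke Corlette's uniqueness of the harmonic metric on the simple flat bundle $(E,D^{h})$ quoted above. I expect the main obstacle to be the existence step: transplanting the Donaldson--Uhlenbeck--Yau theorem to the basic setting requires transverse analogues of Uhlenbeck compactness and elliptic regularity for the merely transversally elliptic operators involved, and one must ensure that the flow (or the approximating solutions) stays basic and that limiting destabilizing objects are genuinely basic coherent subsheaves supported on transversely analytic subvarieties, as in the structure result preceding the Definition. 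This is delicate for irregular Sasakian structures, where there is no K\"ahler orbifold leaf space to reduce the problem to the classical theorem of Simpson.
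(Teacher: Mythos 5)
The paper does not prove this statement: it is imported verbatim from \cite{BK, BK2}, so the only "proof" here is the citation. Your outline is, in substance, the strategy of those references (which in turn follow Hitchin and Simpson): the bidegree decomposition of $(D^{h})^{2}$ showing that flatness reduces to $G^{h}=0$ and $\partial_{{\rm End}(E)}\theta=0$, the existence of a basic Hermitian--Einstein metric from stability, the Chern--Weil/Bogomolov integral identity using $c_{1,B}(E)=0$ and $\int_{M}c_{2,B}(E)\wedge(d\eta)^{n-2}\wedge\eta=0$ to kill the remaining curvature components, simplicity from stability, and uniqueness from convexity of the Donaldson functional. All of these steps are correct in outline.

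The one place where your proposal is genuinely thin is exactly the place you flag yourself: the existence of the basic Hermitian--Einstein metric. This is not a routine transplant of Donaldson--Uhlenbeck--Yau, because the relevant operators are only transversally elliptic and, as the paper itself emphasizes in Remark~\ref{remcom}, Uhlenbeck-type limits of basic objects need not remain basic; moreover the destabilizing object produced by a divergent flow must be shown to be a genuinely \emph{basic} coherent sub-Higgs sheaf supported on a transversely analytic subvariety. In \cite{BK} this entire step is delegated to the foliated Hitchin--Kobayashi correspondence of Baraglia and Hekmati \cite{BH} (whose structure theory of transverse coherent subsheaves is also what the paper quotes before the stability definition), and \cite{BK2} extends the argument to quasi-regular bundles; the irregular case is handled there without passing to a leaf space, precisely because no K\"ahler orbifold quotient is available. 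So your proof is a faithful sketch of the cited argument, but as a self-contained proof it leaves its hardest step open rather than closing it.
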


By Theorem \ref{t4.2} a metric $h$ in this theorem  is a Harmonic metric on the simple flat bundle $(E,D^{h})$ with respect to the Sasakian metric $g_{\eta}$.

\subsection{Non-abelian Hodge correspondence}
By the arguments in the last subsection, we have the canonical $1$ to $1$ correspondence between simple flat bundles $(E,D)$ and stable basic Higgs bundles satisfying $c_{1,B}(E)\,=\,0$   and $ \int_{M} c_{2,B}(E)\wedge(d\eta)^{n-2}\wedge \eta\,=\,0$ via harmonic metrics with respect to the Sasakian metric $g_{\eta}$.
We know that this correspondence is actually a category equivalence  \cite[Theorem 7.2]{BK}. 

\section{Moduli spaces}
Let $(M, T^{1,0}M,\eta)$ be a $(2n+1)$-dimensional  compact  Sasakian manifold.
 
\subsection{Moduli spaces of flat connections}

Let $(E,D)$ be a flat complex vector bundle over $M$.
The space of connections on a ${\mathcal C}^{\infty}$ complex vector bundle $E$ is considered as an affine space modeled on $A^{1}(M, {\rm End}(E))$ via $\omega\mapsto D_{\omega}=D+\omega$ with the gauge group $GL(E)$.
Consider the space of flat connections as 
\[\mathcal A_{flat}(E)=\{\omega\in A^{1}(M, {\rm End}(E))\vert D\omega+\omega\wedge \omega=0\}.\]

We assume that $(E, D)$ is simple.
Then $(E,D)$ admits harmonic metric $h$ which is  unique up to positive scaler multiplication by Corlette's Theorem \cite{Cor}.
Denote by ${\rm End}_{0}(E)$ the trace free part of $ {\rm End}(E)$ associated with $h$.
We assume that the determinant of  $(E,D)$ is the  trivial flat line bundle.
Consider 
\[\mathcal A^{s}_{flat}(E)=\{\omega\in \mathcal A_{flat}(E)\cap  A^{1}(M, {\rm End}_{0}(E))\vert (E,D_{\omega}) \,\,{\rm simple} \}\]
with the action  of the  gauge group $SL(E)$ of gauge transformations with determinant $1$.
Define the moduli space  \[{\mathcal M}^{s}_{flat }(E)=\mathcal A^{s}_{flat}(E)/ (SL(E)/ \langle \zeta_{r} I\rangle)\]
where $\zeta_{r}$ is an $r$-th root of unity.
We notice that $SL(E)$ does not acts effectively on $\mathcal A^{s}_{flat}(E)$ and the reduced group    $(SL(E)/ \langle \zeta_{r} I\rangle)$ acts effectively on  $\mathcal A^{s}_{flat}(E)$.

By  the standard arguments (see  \cite[Chapter 7.]{Ko},  \cite[Chapter 4]{LT} and  \cite[Chapter IV]{FM}), ${\mathcal M}^{s}_{flat }(E)$  is a complex analytic space so that 
the local analytic  structure  of  ${\mathcal M}^{s}_{flat }(E)$ is given by the Kuranishi space of  the DGLA $(A^{\ast}(M, {\rm End}_{0}(E)), D)$.
That is, at a point $D$,  ${\mathcal M}^{s}_{flat }(E)$ is locally isomorphic to  a small neighborhood of
 \[Kur(A^{\ast}(M, {\rm End}_{0}(E)))=\{\omega\in A_{flat}(E)\cap  A^{1}(M, {\rm End}_{0}(E))\vert D^{\ast}\omega=0\}\]
at $0$.

\subsection{Moduli spaces of flat connections with fixed basic structures}

Let $(E, D)$ be a flat complex vector bundle over $M$.
Then the space of connections on a ${\mathcal C}^{\infty}$ complex vector bundle $E$ with the  fixed basic structure $D_{\xi}$ is considered as an affine space modeled on $A^{1}_{B}(M, {\rm End}(E))$ with the gauge group $GL_{B}(E)=GL(E)\cap A^{0}_{B}(M, {\rm End}(E))$.
Define
\[ \mathcal A_{Bflat}(E, D_{\xi})=A_{flat}(E)\cap A^{1}_{B}(M, {\rm End}(E)).
\]

We assume that $(E, D)$ is simple.
A  harmonic metric $h$ is basic by Theorem \ref{t4.2}.
Consider 
\[\mathcal A^{s}_{Bflat}(E, D_{\xi})= \mathcal A^{s}_{flat}(E)\cap  A^{1}_{B}(M, {\rm End}_{0}(E))\]
 with the gauge group  $SL_{B}(E)=SL(E)\cap GL_{B}(E)$.
Define the moduli space  
\[{\mathcal M}^{s}_{Bflat }(E, D_{\xi})=\mathcal A^{s}_{Bflat}(E, D_{\xi})/ (SL_{B}(E)/\langle \zeta_{r} I\rangle).\]

By the same manner as ${\mathcal M}^{s}_{flat}(E)$, ${\mathcal M}^{s}_{Bflat }(E, D_{\xi})$ is a complex analytic space so that 
the local analytic structure  of  ${\mathcal M}^{s}_{Bflat }(E, D_{\xi})$ is given by the Kuranishi space of  the DGLA $(A^{\ast}_{B}(M, {\rm End}_{0}(E)), D)$.
That is, at a point $D$,  ${\mathcal M}^{s}_{Bflat}(E, D_{\xi})$ is locally isomorphic to  a small neighborhood of
 \[Kur(A^{\ast}_{B}(M, {\rm End}_{0}(E)))=\{\omega\in \mathcal{A}_{flat}(E)\cap  A^{1}_{B}(M, {\rm End}_{0}(E))\vert D^{\star}\omega=0\}\]
at $0$.
\begin{remark}
The Sobolev space $L^{p}_{k}(A^{\ast}_{B}(M, {\rm End}_{0}(E)))$ for   $A^{\ast}_{B}(M, {\rm End}_{0}(E))$ is the  closure of $A^{\ast}_{B}(M, {\rm End}_{0}(E))$ in  the usual 
Sobolev space $L^{p}_{k}(A^{\ast}(M, {\rm End}_{0}(E)))$ associated with $A^{\ast}(M, {\rm End}_{0}(E))$.
This is isomorphic to the Sobolev completion of $A^{\ast}_{B}(M, {\rm End}_{0}(E))$ associated with the Hermitian $L^2$-product on
$A^{\ast}_{B}(M, {\rm End}_{0}(E))$ defined  in the last section.
We omit the  standard arguments on the  Sobolev space for   $A^{\ast}_{B}(M, {\rm End}_{0}(E))$ which are almost same as the usual Sobolev space.
Only the little  different thing is in  the  boot strapping process.
For proving higher regularity in boot strapping,  we need to extend basic elliptic operators to elliptic operators (see \cite[Lemma 2.8]{BH}) on $A^{\ast}(M, {\rm End}_{0}(E))$ and use the smoothness in $L^{p}_{k}(A^{\ast}(M, {\rm End}_{0}(E)))$ and the relation $L^{p}_{k}(A^{\ast}_{B}(M, {\rm End}_{0}(E)))\cap A^{\ast}(M, {\rm End}_{0}(E))=A^{\ast}_{B}(M, {\rm End}_{0}(E))$  (see \cite{KT}).

\end{remark}

The following is the main statement in this paper.

\begin{theorem}
The natural map ${\mathcal M}^{s}_{Bflat }(E, D_{\xi}) \to  {\mathcal M}^{s}_{flat }(E)$ induced by the inclusion $\mathcal A^{s}_{Bflat}(E, D_{\xi})\subset \mathcal A^{s}_{flat}(E)$  is an open embedding.
\end{theorem}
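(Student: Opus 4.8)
The plan is to verify the two defining properties of an open embedding separately: that the map is injective, and that at every point of its domain it restricts to an isomorphism of local analytic models onto an open subset of ${\mathcal M}^{s}_{flat}(E)$. Since the paper records that, at a simple flat connection, the two moduli spaces are locally modelled on the Kuranishi spaces of the DGLAs $(A^{\ast}(M,{\rm End}_{0}(E)),D)$ and $(A^{\ast}_{B}(M,{\rm End}_{0}(E)),D)$, and since the map is induced by the inclusion of the second DGLA into the first, the whole comparison is local and reduces to Hodge theory for the flat bundle ${\rm End}_{0}(E)$.

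For injectivity, I would take $\omega_{1},\omega_{2}\in\mathcal A^{s}_{Bflat}(E,D_{\xi})$ with the same image, so that $D_{\omega_{2}}=g\cdot D_{\omega_{1}}$ for some $g\in SL(E)$ (modulo the central $\langle\zeta_{r}I\rangle$). Writing the gauge action as $\omega_{2}=g\omega_{1}g^{-1}-(Dg)g^{-1}$ and contracting with $\xi$, the basicness $\iota_{\xi}\omega_{1}=\iota_{\xi}\omega_{2}=0$ forces $\iota_{\xi}(Dg)=0$, that is, $g$ is $\nabla_{\xi}$-parallel. Hence $g\in SL_{B}(E)$, and the two classes already coincide in ${\mathcal M}^{s}_{Bflat}(E,D_{\xi})$, giving injectivity of the induced map on moduli.

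For the local isomorphism I would first note that, because a harmonic metric is basic (Theorem \ref{t4.2}), one has $D^{\ast}=D^{\star}$ on basic $1$-forms, exactly as $d^{\ast}=d^{\star}$ on $A^{1}_{B}(M)$. Consequently the basic Kuranishi space is precisely the basic locus of the full one, $Kur(A^{\ast}_{B}(M,{\rm End}_{0}(E)))=Kur(A^{\ast}(M,{\rm End}_{0}(E)))\cap A^{1}_{B}(M,{\rm End}_{0}(E))$, and it remains to prove that these two germs at $0$ coincide. The decisive input is the non-abelian analogue of the scalar isomorphism $H^{1}_{B}(M)\cong H^{1}(M)$, namely that every $D$-harmonic $1$-form with values in ${\rm End}_{0}(E)$ is basic. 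Granting this, the Green operator and harmonic projection of the $D$-Laplacian preserve basic forms (the Laplacian commutes with the Reeb action since $\xi$ is Killing and $h$ is basic), so the iterative Kuranishi solution $\omega(a)$ attached to a harmonic seed $a$ stays basic whenever $a$ is basic, and its obstruction computed in $A^{\ast}(M,{\rm End}_{0}(E))$ agrees with the one computed in $A^{\ast}_{B}(M,{\rm End}_{0}(E))$. Since all harmonic seeds are then basic, the two Kuranishi germs are equal, yielding a local analytic isomorphism at $0$; recentring at an arbitrary $D_{\omega}$ with $\omega$ basic, which again carries the fixed basic structure $D_{\xi}$ and a basic harmonic metric, gives the same at every point of the domain. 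Combined with injectivity, this produces the open embedding.

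The main obstacle is exactly the claim that $D$-harmonic $1$-forms are basic, equivalently $H^{1}(M,{\rm End}_{0}(E))\cong H^{1}_{B}(M,{\rm End}_{0}(E))$. I expect to prove it by a Cartan-type argument: for harmonic $\alpha$ the covariant Lie derivative $\mathcal L_{\xi}\alpha=D\iota_{\xi}\alpha+\iota_{\xi}D\alpha=D\iota_{\xi}\alpha$ is again harmonic, as $\mathcal L_{\xi}$ commutes with the Laplacian, hence being $D$-exact it vanishes; this shows $\iota_{\xi}\alpha$ is $D$-parallel, and since $(E,D)$ is simple there are no nonzero $D$-parallel trace-free endomorphisms, so $\iota_{\xi}\alpha=0$ and $\mathcal L_{\xi}\alpha=0$, i.e. $\alpha$ is basic. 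The delicate technical points, to be handled with the Sasakian harmonic machinery underlying Theorem \ref{t4.2}, are that $\mathcal L_{\xi}$ genuinely commutes with the $D$-Laplacian and that the basic and de Rham Laplacians agree in degree one, so that the basic projection is compatible with the global Hodge decomposition.
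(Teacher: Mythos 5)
Your injectivity argument is the same as the paper's, and your Cartan-type lemma — that every $\Delta_{D}$-harmonic $1$-form with values in ${\rm End}_{0}(E)$ satisfies $\iota_{\xi}\alpha=0$ and $\mathcal{L}^{D}_{\xi}\alpha=0$, hence is basic — is correct and gives a clean direct proof of the isomorphism $H^{1}_{B}(M,{\rm End}_{0}(E))\cong H^{1}(M,{\rm End}_{0}(E))$, which the paper instead extracts from the DGLA quasi-isomorphisms of \cite{Ka}. The gap is in the step that upgrades this degree-one statement to an identification of Kuranishi germs. You assert that the Green operator and harmonic projection of the full $D$-Laplacian preserve basic forms because the Laplacian commutes with the Reeb action. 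Commuting with $\mathcal{L}^{D}_{\xi}$ only yields $\xi$-invariance of $G\beta$, not horizontality $\iota_{\xi}G\beta=0$; and in fact $\Delta_{D}$ does not preserve basic forms in degree $\geq 2$. As the paper itself emphasizes, $D^{\ast}\neq D^{\star}$ beyond degree one: for a basic $2$-form $\beta$ the difference $D^{\ast}\beta-D^{\star}\beta$ is a multiple of $\eta$ (essentially $\Lambda\beta\cdot\eta$ up to sign), hence not horizontal whenever $\Lambda\beta\neq 0$. Consequently the iterate $G D^{\ast}(\omega\wedge\omega)$ need not be basic even when $\omega$ is, the Kuranishi solution does not stay in $A^{1}_{B}(M,{\rm End}_{0}(E))$, and the two Kuranishi spaces are not equal as subsets of $A^{1}(M,{\rm End}_{0}(E))$ — they are only isomorphic as germs. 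Your claim of literal equality of the two gauge slices is therefore unjustified and almost certainly false in general.

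What is actually needed, and what the paper supplies, is the homotopy-invariance theorem of Goldman and Millson \cite{GMi2}: the inclusion of DGLAs $A^{\ast}_{B}(M,{\rm End}_{0}(E))\subset A^{\ast}(M,{\rm End}_{0}(E))$ induces an isomorphism on $H^{1}$ and an injection on $H^{2}$ (both read off from the quasi-isomorphisms of \cite{Ka} together with $H^{0}_{B}(M,{\rm End}_{0}(E))=0$), and this alone forces the induced map of Kuranishi germs to be an isomorphism, with no claim that the two harmonic gauge slices coincide pointwise. Your $H^{1}$ lemma could replace half of that input, but you would still need the injectivity on $H^{2}$ — which your proposal never addresses, and which cannot be obtained by the same Cartan argument since harmonic $2$-forms are not basic in general — together with Goldman--Millson or some substitute showing that every small flat deformation of $D$ is gauge-equivalent to a basic one. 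As written, the proposal does not establish local surjectivity, which is the heart of the openness claim.
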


\begin{proof}
Regard $\mathcal A^{s}_{Bflat}(E, D_{\xi})$ as a space of connections.
Assume for  $D+\omega \in \mathcal A^{s}_{Bflat}(E, D_{\xi})$, $g\in GL(E)$, $g^{-1}(D+\omega)g\in \mathcal A^{s}_{Bflat}(E, D_{\xi})$.
Then $D_{\xi}=g^{-1}(D+\omega  )g(\xi)=g^{-1}D_{\xi}g$.
Thus, $g\in GL_{B}(E)$ and so  the injectivity holds.

By \cite{Ka}, we have the sequences of quasi-isomorphisms of DGLAs
\[
A^{\ast}_{B}(M, {\rm End}_{0}(E))\leftarrow {\rm ker}D^{c}  \to H^{\ast}_{B}(M,{\rm End}_{0}(E))
\]
and 
\[
A^{\ast}(M, {\rm End}_{0}(E))\leftarrow {\rm ker}D^{c} \oplus {\rm ker}D^{c}\wedge \eta \to H^{\ast}_{B}(M,{\rm End}_{0}(E))\oplus H^{\ast}_{B}(M, {\rm End}_{0}(E))\otimes \langle \eta \rangle
\]
where $H^{\ast}_{B}(M,{\rm End}_{0}(E))\oplus H^{\ast}_{B}(M, {\rm End}_{0}(E))\otimes \langle \eta \rangle$ is a DGLA whose differential is defined by the $0$-map on the component $H^{\ast}_{B}(M,{\rm End}_{0}(E))$ and the map
\[H^{\ast}_{B}(M, {\rm End}_{0}(E))\otimes \langle \eta \rangle\ni  \alpha\otimes \eta\mapsto  [d\eta]\wedge \alpha\in H^{\ast}_{B}(M, {\rm End}_{0}(E))
\]
on the  component $H^{\ast}_{B}(M, {\rm End}_{0}(E))\otimes \langle \eta \rangle$.
Thus, the cohomology  map $H^{\ast}_{B}(M, {\rm End}_{0}(E))\to  H^{\ast}(M, {\rm End}_{0}(E))$ induced by 
the inclusion $A^{\ast}_{B}(M, {\rm End}_{0}(E)) \subset  A^{\ast}(M, {\rm End}_{0}(E))$
is identified with the cohomology  map induced by \[H^{\ast}_{B}(M, {\rm End}_{0}(E))\to H^{\ast}_{B}(M,{\rm End}_{0}(E))\oplus H^{\ast}_{B}(M, {\rm End}_{0}(E))\otimes \langle \eta \rangle.\]
By $H^{0}_{B}(M, {\rm End}_{0}(E))=0$, we can say that the inclusion $A^{\ast}_{B}(M, {\rm End}_{0}(E)) \to A^{\ast}(M, {\rm End}_{0}(E))$
 induces an isomorphism on the first cohomology and an injection on the second cohomology.
 Since   $D^{\ast}$ on $A^{1}(M, {\rm End}_{0}(E))$ is restricted to $D^{\star}$ on  $A^{\ast}_{B}(M, {\rm End}_{0}(E))$,  
we have the embedding of the  analytic space
 \[Kur(A^{\ast}_{B}(M, {\rm End}_{0}(E)))=\{\omega\in A_{flat}(E)\cap  A^{1}_{B}(M, {\rm End}_{0}(E))\vert D^{\star}\omega=0\}\] 
 into 
\[Kur(A^{\ast}(M, {\rm End}_{0}(E)))=\{\omega\in A_{flat}(E)\cap  A^{1}(M, {\rm End}_{0}(E))\vert D^{\ast}\omega=0\}.\]
This embedding is  actually  a local  isomorphism 
\[(Kur (A^{\ast}_{B}(M, {\rm End}_{0}(E))),0)\cong (Kur (A^{\ast}(M, {\rm End}_{0}(E))), 0)\] by the Goldman-Millson theorem \cite{GMi2}.
This means that  the natural map ${\mathcal M}^{s}_{Bflat }(E, D_{\xi}) \to  {\mathcal M}^{s}_{flat }(E)$ is a local isomorphism for complex analytic spaces.
\end{proof}

Via the canonical embedding ${\mathcal M}^{s}_{Bflat }(E, D_{\xi}) \hookrightarrow  {\mathcal M}^{s}_{flat }(E)$ in this theorem, 
we have \[ {\mathcal M}^{s}_{flat }(E)=\bigcup_{D_{\xi} } {\mathcal M}^{s}_{Bflat }(E, D_{\xi})\]
 such that the union runs over all basic  structures $ D_{\xi} $ coming from simple  flat connections $D$ on $E$.
Suppose ${\mathcal M}^{s}_{Bflat }(E, D_{\xi})\cap {\mathcal M}^{s}_{Bflat }(E, \tilde{D}_{\xi})\not=\o$.
Then we have $g^{-1}\tilde{D}g=D+\omega$ for some $g\in SL(E)$ and $\omega\in  \mathcal A^{s}_{Bflat}(E, D_{\xi})$.
This implies $g^{-1}\tilde{D}_{\xi}g=D_{\xi}$ and so  for any $\tilde{D}+ \alpha\in \mathcal A^{s}_{Bflat}(E, \tilde{D}_{\xi})$, $g^{-1}(\tilde{D}+\alpha)g=D+\omega +g^{-1}\alpha g\in  \mathcal A^{s}_{Bflat}(E, D_{\xi})$.
Thus, we can say that if ${\mathcal M}^{s}_{Bflat }(E, D_{\xi})\cap {\mathcal M}^{s}_{Bflat }(E, \tilde{D}_{\xi})\not=\o$ then ${\mathcal M}^{s}_{Bflat }(E, D_{\xi})={\mathcal M}^{s}_{Bflat }(E, \tilde{D}_{\xi})$ in ${\mathcal M}^{s}_{flat }(E)$.

We say that basic vector bundles are isomorphic if there exists an isomorphism of smooth bundles commuting with basic structures.
It follows from the above arguments  that the union $\bigcup_{D_{\xi} } {\mathcal M}^{s}_{Bflat }(E, D_{\xi})$ can be seen as   a  disjoint union running over  isomorphism classes of basic vector bundles.
Since the compliment of one stratum ${\mathcal M}^{s}_{Bflat }(E, D_{\xi})$ in ${\mathcal M}^{s}_{flat }(E)$ is a union of other strata, we have the following statement.

\begin{corollary}
${\mathcal M}^{s}_{Bflat }(E, D_{\xi})$ is an open and closed set  in $ {\mathcal M}^{s}_{flat }(E)$.
\end{corollary}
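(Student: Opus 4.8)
The plan is to obtain both openness and closedness as formal consequences of the open embedding established in the preceding theorem, combined with the stratification discussed just above. Openness is immediate: the theorem states that the natural map ${\mathcal M}^{s}_{Bflat}(E, D_{\xi}) \to {\mathcal M}^{s}_{flat}(E)$ is an open embedding, so its image---which is precisely the stratum ${\mathcal M}^{s}_{Bflat}(E, D_{\xi})$ regarded inside ${\mathcal M}^{s}_{flat}(E)$---is open. For this half I would simply cite the theorem.

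For closedness I would show instead that the complement is open. The covering ${\mathcal M}^{s}_{flat}(E) = \bigcup_{D_{\xi}} {\mathcal M}^{s}_{Bflat}(E, D_{\xi})$ runs over all basic structures arising from simple flat connections on $E$, and the dichotomy proved just above guarantees that any two strata are either identical or disjoint. Fixing one stratum ${\mathcal M}^{s}_{Bflat}(E, D_{\xi})$, every point outside it lies in some other stratum ${\mathcal M}^{s}_{Bflat}(E, \tilde{D}_{\xi})$ which, by the dichotomy, must then be disjoint from the fixed one; hence the complement equals the union of all strata distinct from ${\mathcal M}^{s}_{Bflat}(E, D_{\xi})$. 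Each of these is open by the first step, so their union is open, and therefore ${\mathcal M}^{s}_{Bflat}(E, D_{\xi})$ is closed.

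I do not expect a genuine obstacle here, because the analytic substance---the local isomorphism of Kuranishi spaces and the injectivity yielding the open embedding---is already packaged into the theorem. The only point demanding mild care is the bookkeeping for the complement: I must confirm that the covering genuinely exhausts ${\mathcal M}^{s}_{flat}(E)$, so that no point escapes the strata, and that the equal-or-disjoint dichotomy is invoked correctly, so that strata coinciding with the fixed one do not spuriously contribute to the complement. Both facts are secured in the discussion preceding the statement, so the proof reduces to the elementary observation that, in a partition of a space into open pieces, every piece is simultaneously open and closed.
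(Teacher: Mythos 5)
Your proposal is correct and follows exactly the paper's route: openness comes from the open-embedding theorem, and closedness follows because the complement of the fixed stratum is the union of the remaining (pairwise equal-or-disjoint) strata, each of which is open. No difference in substance from the argument the paper gives immediately before the corollary.
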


Let ${\mathcal M}^{s}_{flat }(SL_r)$ be the moduli space of simple flat complex vector bundles over $M$ of rank $r$ with the trivial  determinants.
As \cite[Section (8.2)]{Fu}, by \cite[Theorem 1.1]{JM}, ${\mathcal M}^{s}_{flat }(SL_r)$ is analytically   isomorphic to  the quotient $R^{s}(\pi_{1}M, SL_{r}(\C))/SL_{r}(\C)$ of the space of  stable points of the variety of representations.
By geometric invariant theory $R^{s}(\pi_{1}M, SL_{r}(\C))/SL_{r}(\C)$ is a quasi-projective variety.
Thus, ${\mathcal M}^{s}_{flat }(SL_r)$ is a Hausdorff space which     has finitely many  connected components.

\begin{corollary}\label{fidi}
${\mathcal M}^{s}_{flat }(SL_r)$ is a finite disjoint union $\bigcup_{(E, D_{\xi})} {\mathcal M}^{s}_{Bflat }(E, D_{\xi})$ of open and  closed sets such that the union runs over the set of isomorphism classes of basic vector bundles $(E, D_{\xi})$ coming from simple flat complex vector bundles $(E, D)$  of rank $r$ with the trivial  determinants.
\end{corollary}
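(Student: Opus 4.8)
The plan is to synthesize the statement from the open-embedding theorem and the disjointness dichotomy established immediately above, the only genuinely new ingredient being finiteness of the indexing set. First I would check that the proposed stratification both covers ${\mathcal M}^{s}_{flat}(SL_r)$ and is disjoint. Every point is represented by a simple flat bundle $(E,D)$ of rank $r$ with trivial determinant, which carries its canonical basic structure $D_{\xi}$ and hence lies in ${\mathcal M}^{s}_{Bflat}(E, D_{\xi})$, giving the covering. For disjointness, strata attached to non-isomorphic basic bundles cannot meet: if the underlying smooth bundles differ no flat bundle can belong to both, while if they agree the dichotomy above shows that ${\mathcal M}^{s}_{Bflat}(E, D_{\xi})$ and ${\mathcal M}^{s}_{Bflat}(E, \tilde D_{\xi})$ either coincide, precisely when $D_{\xi}$ and $\tilde D_{\xi}$ are isomorphic as basic structures, or are disjoint. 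Grouping by isomorphism class of $(E, D_{\xi})$ thus yields a genuine partition.

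Next I would promote the preceding corollary, which gives open-closedness inside the fixed-bundle space ${\mathcal M}^{s}_{flat}(E)$, to open-closedness in all of ${\mathcal M}^{s}_{flat}(SL_r)$. This is formal: by the covering and disjointness just verified, the complement of any one stratum is a union of the remaining strata, each of which is open; hence every stratum is simultaneously open and closed in the total space.

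The essential step is finiteness of the index set, where I expect the real content to lie. Here I would invoke the identification of ${\mathcal M}^{s}_{flat}(SL_r)$ with the GIT quotient $R^{s}(\pi_1 M, SL_r(\C))/SL_r(\C)$, a quasi-projective variety and therefore one with only finitely many connected components. Each nonempty stratum, being open and closed, is a union of connected components and so contains at least one; since the strata are pairwise disjoint, their number is bounded by the number of connected components and is finite. This appeal to quasi-projectivity is the crux: open-closedness and disjointness alone would permit infinitely many strata, and it is only the finiteness of the component set, supplied externally by geometric invariant theory, that forces the partition to be finite.
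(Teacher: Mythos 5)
Your proposal is correct and follows essentially the same route as the paper: the covering and the coincide-or-disjoint dichotomy from the preceding discussion, open-closedness via the complement being a union of open strata, and finiteness from the identification of ${\mathcal M}^{s}_{flat}(SL_r)$ with the quasi-projective GIT quotient $R^{s}(\pi_{1}M, SL_{r}(\C))/SL_{r}(\C)$, which has only finitely many connected components. Nothing essential is missing.
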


We have the following important consequence of the finiteness of the union \[{\mathcal M}^{s}_{flat }(SL_r)=\bigcup_{(E, D_{\xi})} {\mathcal M}^{s}_{Bflat }(E, D_{\xi}).\]

\begin{corollary}\label{finiss}
The set of isomorphism classes of basic vector  bundles of  rank $r$ over a compact Sasakian manifold $M$  induced by semi-simple flat bundles is finite.
\end{corollary}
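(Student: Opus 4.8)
The plan is to derive the statement from Corollary \ref{fidi}, which already provides finiteness for \emph{simple} flat bundles with \emph{trivial} determinant, by means of two reductions. First I would pass from semi-simple to simple: a semi-simple flat bundle $(E,D)$ of rank $r$ decomposes as a direct sum $\bigoplus_i (E_i,D_i)$ of simple flat bundles, and its basic structure is $D_\xi = \bigoplus_i (D_i)_\xi$. Isomorphisms of the summands and permutations among them induce isomorphisms of the direct sum, so the class of $D_\xi$ depends only on the unordered collection of the classes $[(D_i)_\xi]$. As the ranks $\mathrm{rank}(E_i)$ form a partition of $r$ and there are only finitely many partitions, it suffices to show that for each $r'\le r$ the simple flat bundles of rank $r'$ with \emph{arbitrary} determinant induce only finitely many classes of basic structures.

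The second reduction removes the determinant, and its key input is the rank-one case. By the Hodge-theoretic isomorphism $H^1_B(M)\cong H^1(M,\C)$, any flat line bundle $d+\alpha$ has a basic representative $\alpha_B$ of $[\alpha]$ with $\iota_\xi\alpha_B=0$, so its basic structure is isomorphic to the trivial one. Hence tensoring a simple flat bundle $(E,D)$ by a flat line bundle $N$ does not change the basic-structure class: writing $(D\otimes D_N)_\xi = D_\xi\otimes 1 + 1\otimes (D_N)_\xi$ and trivializing $(D_N)_\xi$ by a smooth automorphism of $N$, one obtains $(E\otimes N)_\xi \cong (E,D_\xi)$. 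If $N$ can be chosen with $N^{r'}\cong(\det E)^{-1}$, then $E\otimes N$ is simple of rank $r'$ with trivial determinant and the same basic-structure class, placing $[D_\xi]$ in the finite set furnished by Corollary \ref{fidi} for rank $r'$.

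The hard part is exactly the existence of this root. Since $\mathrm{Hom}(\pi_1 M,\C^*)$ splits as a divisible part $(\C^*)^{b_1(M)}$, where roots always exist, times a finite torsion part $T$, where they may fail, I would first use divisibility in $(\C^*)^{b_1(M)}$ to tensor $\det E$ into $T$, leaving only finitely many possible torsion determinants $\chi\in T$. For each fixed $\chi$ the moduli space of simple flat bundles of rank $r'$ with determinant $\chi$ is, exactly as in the $SL_{r'}$ case, a quasi-projective variety with finitely many components, and the stratification argument of Theorem \ref{IMT} and Corollary \ref{fidi} carries over verbatim: it is governed solely by the trace-free DGLA $(A^{\ast}_{B}(M,\mathrm{End}_0(E)),D)$ and is insensitive to the fixed determinant, which is itself a flat line bundle and hence carries a trivial basic structure. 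Thus each $\chi$ contributes only finitely many classes, and summing over the finitely many torsion characters yields the asserted finiteness.
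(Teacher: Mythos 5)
Your argument is correct and follows the same route as the paper, which simply says that the case $r\ge 2$ "easily follows from the case $r=1$ and Corollary \ref{fidi} inductively"; you have filled in exactly the steps this phrase compresses (decomposition of a semi-simple bundle into simple summands over partitions of $r$, trivialization of the basic structure of a topologically trivial flat line bundle via $H^1_B(M)\cong H^1(M,\C)$, twisting by an $r'$-th root in the divisible part of $\mathrm{Hom}(\pi_1 M,\C^*)$, and running the Corollary \ref{fidi} argument for each of the finitely many residual torsion determinants). Your observation that the root of the determinant need not exist in the torsion part, so that one must instead redo the stratification for each fixed torsion character, is a genuine detail the paper leaves implicit, and your treatment of it is sound.
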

\begin{proof}
In case $r=1$, the statement is consequence of isomorphism from the fact that the first de Rham cohomology $H^{1}(M, \C)$ is isomorphic to the basic cohomology $H^1_{B}(M)$ (see \cite{BoG}).
The case $r\ge 2$ easily  follows from the case $r=1$ and  Corollary \ref{fidi} inductively.
\end{proof}

Since the singularity of ${\mathcal M}^{s}_{flat }(SL_r)$ at a point $(E, D)$ is the singularity of  \[Kur(A^{\ast}_{B}(M, {\rm End}_{0}(E)))=\{\omega\in A_{flat}(E)\cap  A^{1}_{B}(M, {\rm End}_{0}(E))\vert D^{\star}\omega=0\}\] 
at the origin $0$,
we have the following criterion.

\begin{corollary}
${\mathcal M}^{s}_{flat }(SL_r)$ is smooth at a point $(E, D)$ if $H^{2}_{B}(M, {\rm End}_{0}(E))=0$.

\end{corollary}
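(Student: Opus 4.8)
The plan is to apply the standard vanishing-obstruction criterion of Kuranishi deformation theory to the controlling DGLA $(A^{\ast}_{B}(M, {\rm End}_{0}(E)), D)$, whose Maurer--Cartan elements are precisely the basic flat deformations, i.e. the $\omega$ with $D\omega + \omega\wedge\omega = 0$. As recalled immediately before the statement, the germ of ${\mathcal M}^{s}_{flat}(SL_r)$ at $(E,D)$ coincides with the germ at $0$ of $Kur(A^{\ast}_{B}(M, {\rm End}_{0}(E)))$, so it suffices to prove that this Kuranishi space is a smooth complex manifold near the origin whenever $H^{2}_{B}(M, {\rm End}_{0}(E)) = 0$.

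First I would fix the Hodge decomposition for the basic elliptic complex. Writing $\mathcal{H}^{i}$ for the space of $D$-harmonic basic forms (canonically $\cong H^{i}_{B}(M, {\rm End}_{0}(E))$), the basic Laplacian associated with $D^{\star}$ yields $A^{1}_{B}(M, {\rm End}_{0}(E)) = \mathcal{H}^{1} \oplus {\rm im}\,D \oplus {\rm im}\,D^{\star}$ together with a Green's operator $G$ and a harmonic projector $\mathbf{H}$. The elliptic theory needed here is exactly the basic Sobolev package described in the Remark, obtained by extending the basic operators to honest elliptic operators on $A^{\ast}(M, {\rm End}_{0}(E))$.

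Next I would build the Kuranishi map in the usual way. For a harmonic class $a \in \mathcal{H}^{1}$ of small norm, the implicit function theorem on the relevant Sobolev completion gives a unique small solution $\omega(a)$, depending analytically on $a$, of the fixed-point equation $\omega = a - D^{\star} G(\omega\wedge\omega)$; this $\omega(a)$ automatically satisfies the gauge condition $D^{\star}\omega = 0$. A short Hodge-theoretic computation shows that $\omega(a)$ solves the Maurer--Cartan equation if and only if the obstruction $\kappa(a) := \mathbf{H}(\omega(a)\wedge\omega(a)) \in \mathcal{H}^{2}$ vanishes, so the Kuranishi space is cut out as $\kappa^{-1}(0)$ inside a neighborhood of $0$ in $\mathcal{H}^{1}$.

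The conclusion is then immediate. If $H^{2}_{B}(M, {\rm End}_{0}(E)) = 0$ then $\mathcal{H}^{2} = 0$, so $\kappa$ is identically zero and $\kappa^{-1}(0)$ is an entire neighborhood of $0$ in $\mathcal{H}^{1} \cong H^{1}_{B}(M, {\rm End}_{0}(E))$, hence a smooth complex manifold; transporting this through the local isomorphism shows ${\mathcal M}^{s}_{flat}(SL_r)$ is smooth at $(E,D)$. The only genuinely delicate point is the basic elliptic theory underpinning the Hodge decomposition and $G$, since the basic complex is not the full de Rham complex; but this is supplied by the Remark, and once it is in hand the remainder is the textbook argument that a vanishing second cohomology kills all obstructions.
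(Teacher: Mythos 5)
Your proposal is correct and follows essentially the same route as the paper: the paper reduces the statement to the fact (established just before the corollary, via the open-embedding theorem) that the germ of ${\mathcal M}^{s}_{flat}(SL_r)$ at $(E,D)$ is the germ at $0$ of $Kur(A^{\ast}_{B}(M,{\rm End}_{0}(E)))$, and then invokes the standard vanishing-obstruction criterion, which is exactly the Kuranishi-map construction you spell out. The only difference is that you make explicit the basic Hodge decomposition, Green's operator, and obstruction map $\kappa$, which the paper leaves implicit.
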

In case $n=1$ we have an isomorphism $H^{2}_{B}(M, {\rm End}_{0}(E))\cong H^{0}_{B}(M, {\rm End}_{0}(E))=0$ via $\star$.

\begin{corollary}
If $n=1$, then ${\mathcal M}^{s}_{flat }(SL_r)$ is a smooth complex manifold.
\end{corollary}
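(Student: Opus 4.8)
The plan is to deduce the statement from the pointwise smoothness criterion established just above: the moduli space ${\mathcal M}^{s}_{flat }(SL_r)$ is smooth at a point $(E,D)$ as soon as $H^{2}_{B}(M, {\rm End}_{0}(E))=0$. Since $n=1$ means $\dim M=2n+1=3$ and every point of ${\mathcal M}^{s}_{flat }(SL_r)$ is represented by a simple flat bundle $(E,D)$ with trivial determinant, it suffices to verify this vanishing for every such bundle. The complex analytic structure will then have no singular points, and ${\mathcal M}^{s}_{flat }(SL_r)$ will be a complex manifold.

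To obtain $H^{2}_{B}(M, {\rm End}_{0}(E))=0$, I would first fix the harmonic metric $h$ on $(E,D)$, which is basic by Theorem \ref{t4.2}. This equips the basic complex $(A^{\ast}_{B}(M, {\rm End}_{0}(E)), D)$ with the Hermitian $L^2$-product and the formal adjoint $D^{\star}$, hence with a Hodge decomposition whose harmonic spaces represent $H^{r}_{B}(M, {\rm End}_{0}(E))$. The decisive ingredient is the basic Hodge star $\star$: using the trace form to identify ${\rm End}_{0}(E)$ with its dual, $\star$ sends $A^{r}_{B}(M, {\rm End}_{0}(E))$ to $A^{2n-r}_{B}(M, {\rm End}_{0}(E))$ and intertwines $D$ with $D^{\star}$ up to sign, so it carries harmonic forms to harmonic forms. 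For $n=1$ this produces an isomorphism $H^{2}_{B}(M, {\rm End}_{0}(E))\cong H^{0}_{B}(M, {\rm End}_{0}(E))$.

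It then remains to see that $H^{0}_{B}(M, {\rm End}_{0}(E))=0$. This cohomology is the space of $D$-parallel basic trace-free endomorphisms of $E$; since $(E,D)$ is simple, the only $D$-parallel endomorphisms are the scalars, whose trace-free part vanishes. Hence $H^{0}_{B}=0$ and therefore $H^{2}_{B}=0$ for every such $(E,D)$, which gives smoothness at every point.

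The main obstacle is the duality step. One must check that the basic Hodge star $\star$, together with the identification ${\rm End}_{0}(E)\cong {\rm End}_{0}(E)^{\ast}$, genuinely commutes with the Laplacian $\Delta_{D}=DD^{\star}+D^{\star}D$ of the flat-coefficient basic complex, so that it restricts to an isomorphism of harmonic spaces. This is the flat-coefficient transverse analogue of Poincar\'e duality on the $2n$-dimensional leaf space, and because $D^{\ast}$ is known to coincide with $D^{\star}$ only on basic $1$-forms, some care is required to confirm the relation $D\star=\pm\star D^{\star}$ precisely between the degrees $0$, $1$ and $2$ at hand.
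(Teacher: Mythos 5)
Your proposal follows essentially the same route as the paper: the paper's entire justification is the remark that for $n=1$ the basic Hodge star gives an isomorphism $H^{2}_{B}(M,{\rm End}_{0}(E))\cong H^{0}_{B}(M,{\rm End}_{0}(E))=0$ (the latter vanishing by simplicity), which combined with the preceding smoothness criterion yields the statement. Your additional care about verifying that $\star$ intertwines the basic Laplacians is a reasonable point of rigor, but it is the standard transverse Poincar\'e duality in basic Hodge theory and does not constitute a different argument.
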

\begin{remark}
We can say that ${\mathcal M}^{s}_{flat }(SL_r)$ is smooth at a point $(E, D)$ if  $H^{2}(M, {\rm End}_{0}(E))=0$.
But, in case $n=1$, by the poincare duality,  $H^{2}(M, {\rm End}_{0}(E))\not=0$ if and only if $H^{1}(M, {\rm End}_{0}(E))\not=0$.
Thus,  in case $n=1$, $H^{2}(M, {\rm End}_{0}(E))=0$ may not hold in general.

\end{remark}

\subsection{Moduli spaces of harmonic bundles and basic (pluri)harmonic bundles}

Let $(E, D)$ be a flat complex vector bundle over $M$.
Take a Hermitian metric $h$ on $E$.
Then we have the decomposition ${\rm End}_{0}(E)={\frak u}_{0}(E)\oplus P_{0}(E)$ such that  ${\frak u}_{0}(E)$ (resp. $P_{0}(E)$ consists of anti-self-adjoint (resp. self-adjoint) operators.
Decompose $A^{\ast}(M, {\rm End}_{0}(E))=A^{\ast}({\frak u}_{0}(E))\oplus A^{\ast}(P_{0}(E))$.
We assume $(E, D)$ is simple and $h$ is harmonic.
Then for the decomposition $D=\nabla + \phi$ with respect to  $ A^{\ast}({\frak u}_{0}(E))\oplus A^{\ast}(P_{0}(E))$, we have 
$\nabla^{\ast}\phi=0$.

Assume that the determinant of $(E, D)$ is the trivial flat line bundle.
The space of simple harmonic bundle structures  on $E$ is $\mathcal A^{s}_{harm}(E)=\{(\omega \in A^{s}_{flat}(E)\vert  \nabla_{\omega} ^{\ast}\phi_{\omega}=0\}$ with the gauge group  $SU(E)$ of unitary gauge transformations with determinant $1$.
Define the moduli space  \[\mathcal M^{s}_{harm}(E)=\mathcal A^{s}_{harm}(E)/(SU(E)/\langle \zeta_{r} I\rangle).\]
By the existence and uniqueness of harmonic metrics on a simple flat bundle proved by Corlette \cite{Cor},
the inclusion  $\mathcal A^{s}_{harm}(E)\subset \mathcal A^{s}_{flat}(E)$ induces a homeomorphism $\mathcal M^{s}_{harm}(E)\cong {\mathcal M}^{s}_{flat }(E)$.

Since  $h$ is basic, we have  $A^{\ast}_{B}(M, {\rm End}_{0}(E))=A^{\ast}_{B}({\frak u}_{0}(E))\oplus A_{B}^{\ast}(P_{0}(E))$.
For $\omega \in A^{s}_{Bflat}(E, D_{\xi})$, we have the decomposition  $D_{\omega}=\nabla_{\omega} + \phi_{\omega}$ with respect to $A^{\ast}_{B}({\frak u}_{0}(E))\oplus A_{B}^{\ast}(P_{0}(E))$.
Decompose the connection $\nabla_{\omega}$ as $\nabla_{\omega} =\partial_{\omega} +\overline\partial_{\omega}$
where $\partial_{\omega}:\,A^{p,q}_{B}(M,E)\longrightarrow
A^{p+1,q}_{B}(M,E)$ and $\overline\partial_{\omega}:\,A^{p,q}_{B}(M,E)\,\longrightarrow\, A^{p,q+1}_{B}(M,E)$, and also
decompose $\phi_{\omega}$ as 
$\phi_{\omega}=\theta_{\omega}+\overline\theta_{\omega}$ where 
$\theta_{\omega}\in\, A^{1,0}_{B}(M,\,{\rm End}_{0}(E))$ and $ 
\overline\theta_{\omega}\in A^{0,1}_{B}(M,\,{\rm End}_{0}(E))$.
By Theorem \ref{t4.2},   the space of simple harmonic bundle structures  on $E$ with fixed basic structure $D_{\xi}$  is \[\mathcal A^{s}_{Bharm}(E)=\{\omega \in \mathcal A^{s}_{Bflat}(E, D_{\xi})\vert \overline{\partial}_{\omega}\overline{\partial}_{\omega}= \overline{\partial}_{\omega} \theta_{\omega}=\theta_{\omega}\wedge \theta_{\omega}=0\}\]
with the gauge group $SU_{B}(E)=SU(E)\cap GL_{B}(E)$.
 Define the moduli space  \[\mathcal M^{s}_{Bharm}(E, D_{\xi})=\mathcal A^{s}_{Bharm}(E, D_{\xi})/(SU_{B}(E)/\langle \zeta_{r} I\rangle).\]
 By the existence,  uniqueness and basicness  of harmonic metrics on a simple flat bundle, the inclusion  $\mathcal A^{s}_{Bharm}(E,D_{\xi})\subset \mathcal A^{s}_{Bflat}(E,D_{\xi})$ induces a homeomorphism $\mathcal M^{s}_{Bharm}(E,D_{\xi})\cong {\mathcal M}^{s}_{Bflat }(E,D_{\xi})$.
 
 \begin{remark}
 In case $n=1$ i.e. $\dim M=3$, $\mathcal A^{s}_{Bharm}(E,D_{\xi})$ can be seen as solutions of Hitchin's self-dual equations \cite{Hit} on a basic vector bundle and hence ${\mathcal M}^{s}_{Bharm }(E,D_{\xi}) $ can be seen as the moduli space of basic Hitchin pairs.
 In this consideration, 
 Ono shows that  the moduli space ${\mathcal M}^{s}_{Bharm}(E,D_{\xi}) $ admits a canonical hyper-K\"ahler structure \cite{Ono}.
 \end{remark}

\subsection{Moduli spaces of stable basic Higgs bundles}
Let $(M,T^{1,0}M, \eta)$ be a compact  Sasakian manifold.
Let $(E, \theta)$ be a  basic Higgs  bundle.
Denote by $\nabla^{''}$ the structure of a basic  holomorphic bundle $E$.
Then the space of basic Higgs bundle structures on a  ${\mathcal C}^{\infty}$ complex vector bundle $E$ with the  fixed basic structure $D_{\xi}=\nabla^{''}_{\xi}$ is 
\[\mathcal A_{BHiggs}(E)=\{\omega\in  A^{1}_{B}(M, {\rm End}(E))\vert D^{\prime\prime}\omega+\omega\wedge \omega=0\}\]
in an affine space modeled on $A^{1}_{B}(M, {\rm End}(E))$  via $\omega\mapsto (\nabla^{''}_{\omega}=\nabla^{''}+\omega^{0,1},\theta_{\omega}=\theta+\omega^{1,0})$ with the gauge group $GL_{B}(E)$ where $D^{\prime\prime}$ is the  operator on $A^{\ast}_{B}(M, {\rm End}(E))$ induced by $\bar\partial_{E}+\theta$ .
For $\omega \in \mathcal A_{BHiggs}(E)$, denote by  $(E_{\omega},\theta_{\omega})$ the corresponding basic Higgs bundle.

We assume that  the basic Higgs  bundle $(E,\theta)$  is stable and satisfies
$$c_{1,B}(E)\,=\,0\qquad {\rm and} \qquad \int_{M} c_{2,B}(E)\wedge(d\eta)^{n-2}\wedge \eta\,=\,0.$$
Then by Theorem \ref{harmonicmetric}, $(E,\theta)$ comes from a simple flat bundle $(E,D)$ equipped with a harmonic metric $h$.
Additionally we  assume that  the determinant of the Higgs bundle  $(E,\theta)$ is trivial.
Then the determinant of the flat bundle  $(E,D)$ is trivial.
Consider the space of stable basic Higgs bundle structures on $E$ with fixed basic structure $D_{\xi}$ as
\[\mathcal A^{st}_{BHiggs}(E, D_{\xi})=\{\omega\in \mathcal A_{BHiggs}(E)\cap  A^{1}_{B}(M, {\rm End}_{0}(E))\vert  (E_{\omega},\theta_{\omega}) \,\,  stable \}\]  with the action of $SL_{B}(E)$.
Define the moduli space
 \[{\mathcal M}^{st}_{BHiggs }(E, D_{\xi})=\mathcal A^{st}_{BHiggs}(E, D_{\xi})/ (SL_{B}(E)/\langle \zeta_{r} I\rangle).\]

Associated with  the decomposition $A^{\ast}_{B}(M, {\rm End}_{0}(E))=A^{\ast}_{B}({\frak u}_{0}(E))\oplus A_{B}^{\ast}(P_{0}(E))$, we consider the map linear map \[{\mathcal I}:A^{1}_{B}({\frak u}_{0}(E))\oplus A^{1}_{B}(P_{0}(E))\ni \alpha+\beta \mapsto \alpha^{0,1}+ \beta^{1,0} \in A^{0,1}_{B}(M, {\rm End}_{0}(E))\oplus A^{1,0}_{B}(M, {\rm End}_{0}(E))\] with the inverse
$\alpha^{0,1}-\overline{\alpha^{0,1}}_{h}+ \beta^{1,0}+ \overline{\beta^{1,0}}_{h}$ where $\overline{\alpha^{0,1}}_{h}, \overline{\beta^{1,0}}_{h}$ are adjoints with respect to $h$.
The non-abelian   correspondence constructed in the last section  says that the restriction of  ${\mathcal I}$ on $\mathcal A^{s}_{Bharm}(E)$ has the image $\mathcal A^{st}_{BHiggs}(E, D_{\xi})$ 
Consider the induced map $\overline{\mathcal I}: \mathcal M^{s}_{Bharm}(E, D_{\xi})\to {\mathcal M}^{st}_{BHiggs }(E, D_{\xi})$.
Define 
\[\mathcal A_{BHiggsharm}(E, D_{\xi})=\{\omega\in\mathcal A^{st}_{BHiggs}(E, D_{\xi}) \vert (D^{h}_{\omega})^2=0\} \]
where $D^{h}_{\omega}$ is the canonical connection of the Higgs bundle $(E_{\omega}, \theta_{\omega})$ with respect to the basic metric $h$.
By Theorem \ref{harmonicmetric},  the restriction of the inverse of  ${\mathcal I}$ on $\mathcal A_{BHiggsharm}(E, D_{\xi})$ induces the inverse of $\overline{\mathcal I}: \mathcal M^{s}_{Bharm}(E, D_{\xi})\to {\mathcal M}^{st}_{BHiggs }(E, D_{\xi})$.
We have shown the following statement.
\begin{theorem}
The map  $\overline{\mathcal I}: \mathcal M^{s}_{Bharm}(E, D_{\xi})\to {\mathcal M}^{st}_{BHiggs }(E, D_{\xi})$ is a homeomorphism.
\end{theorem}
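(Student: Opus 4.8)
The plan is to realize $\overline{\mathcal I}$ as the map descending from the fixed linear isomorphism $\mathcal I$, to verify that it is a well-defined continuous bijection, and then to isolate the continuity of its inverse as the genuinely analytic point. First I would record that $\mathcal I:A^{1}_{B}(\mathfrak u_{0}(E))\oplus A^{1}_{B}(P_{0}(E))\to A^{0,1}_{B}(M,{\rm End}_{0}(E))\oplus A^{1,0}_{B}(M,{\rm End}_{0}(E))$ is a linear isomorphism, hence a homeomorphism, and that a unitary gauge transformation $u\in SU_{B}(E)$ preserves the $h$-adjoint decomposition $\mathfrak u_{0}(E)\oplus P_{0}(E)$ and commutes with the projections onto bidegrees $(0,1)$ and $(1,0)$. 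Therefore $\mathcal I$ intertwines the $SU_{B}(E)$-action on $\mathcal A^{s}_{Bharm}(E,D_{\xi})$ with the action of $SU_{B}(E)\subset SL_{B}(E)$ on the Higgs side. By Theorem \ref{t4.2}, membership in $\mathcal A^{s}_{Bharm}(E,D_{\xi})$ records exactly that $h$ is harmonic for $D_{\omega}$, and the canonical connection of $\mathcal I(\omega)$ with respect to $h$ is then the flat connection $D_{\omega}$; conversely $\mathcal I^{-1}$ sends an element of $\mathcal A_{BHiggsharm}(E,D_{\xi})$ back to a harmonic structure. Thus $\mathcal I$ restricts to an $SU_{B}(E)$-equivariant homeomorphism $\mathcal A^{s}_{Bharm}(E,D_{\xi})\xrightarrow{\sim}\mathcal A_{BHiggsharm}(E,D_{\xi})$, inducing a homeomorphism $\mathcal M^{s}_{Bharm}(E,D_{\xi})\cong \mathcal A_{BHiggsharm}(E,D_{\xi})/(SU_{B}(E)/\langle\zeta_{r}I\rangle)$.

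Next I would analyze the inclusion $\mathcal A_{BHiggsharm}(E,D_{\xi})\hookrightarrow \mathcal A^{st}_{BHiggs}(E,D_{\xi})$ and show it induces a continuous bijection of quotients, so that $\overline{\mathcal I}$ is a continuous bijection onto $\mathcal M^{st}_{BHiggs}(E,D_{\xi})$. Surjectivity is the existence half of Theorem \ref{harmonicmetric}: any stable basic Higgs structure $\omega'$ admits a harmonic basic metric, which can be written as the transform of the fixed $h$ by a complex basic gauge transformation, normalized to have determinant one using that the harmonic metric is unique up to a positive scalar; moving $\omega'$ by the inverse of this transformation places it in $\mathcal A_{BHiggsharm}(E,D_{\xi})$ inside its $SL_{B}(E)$-orbit. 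Injectivity is the uniqueness half: if $\omega_{1},\omega_{2}\in\mathcal A_{BHiggsharm}(E,D_{\xi})$ satisfy $g\cdot\omega_{1}=\omega_{2}$ for some $g\in SL_{B}(E)$, then $h$ and $g_{*}h$ are both harmonic for the stable Higgs bundle of $\omega_{2}$, so uniqueness forces $g_{*}h=c\,h$ with $c>0$; together with $\det g=1$ this makes $g$ unitary, i.e. $g\in SU_{B}(E)/\langle\zeta_{r}I\rangle$, so $\omega_{1}$ and $\omega_{2}$ already lie in one $SU_{B}(E)$-orbit. Combined with the previous paragraph this exhibits $\overline{\mathcal I}$ as a continuous bijection.

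The hard part will be the continuity of $\overline{\mathcal I}^{-1}$, which is equivalent to showing that the map sending a stable basic Higgs structure to its harmonic representative in $\mathcal A_{BHiggsharm}(E,D_{\xi})$ is continuous, i.e. that the harmonic basic metric depends continuously on the Higgs structure. This is the Sasakian counterpart of Simpson's continuity of the solution of the Hermitian–Einstein equation in families, and the added difficulty is precisely the basicness constraint flagged in Proposition \ref{com} and Remark \ref{remcom}. I would establish it by the basic analogue of Simpson's compactness given in Corollary \ref{Bcom}: starting from a convergent sequence in $\mathcal M^{st}_{BHiggs}(E,D_{\xi})$, lift to harmonic representatives, apply the basic compactness to extract a subsequence converging up to basic unitary gauge, and identify the limit with the $\overline{\mathcal I}$-image of the limit point via the uniqueness established above; elliptic bootstrapping in the basic Sobolev spaces of the preceding remark then upgrades this to convergence in the required topology. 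This yields the continuity of $\overline{\mathcal I}^{-1}$ and completes the proof that $\overline{\mathcal I}$ is a homeomorphism.
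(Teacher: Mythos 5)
Your first two paragraphs reproduce the paper's argument: the paper likewise observes that ${\mathcal I}$ is a linear isomorphism with the explicit $h$-adjoint inverse, that it carries $\mathcal A^{s}_{Bharm}(E,D_{\xi})$ onto the slice $\mathcal A_{BHiggsharm}(E,D_{\xi})$, and that Theorem \ref{harmonicmetric} (existence of the harmonic metric for surjectivity, uniqueness up to a positive scalar for injectivity) makes the induced map on quotients a bijection, with the inverse induced by ${\mathcal I}^{-1}\vert_{\mathcal A_{BHiggsharm}}$. Your spelled-out injectivity argument ($g_{*}h=c\,h$, $\det g=1$, hence $c=1$ and $g$ unitary) is exactly the intended content, stated more carefully than in the paper. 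Your third paragraph addresses a point the paper passes over in silence, namely why the bijection $\mathcal A_{BHiggsharm}/(SU_{B}(E)/\langle\zeta_{r}I\rangle)\to\mathcal A^{st}_{BHiggs}/(SL_{B}(E)/\langle\zeta_{r}I\rangle)$ has continuous inverse; you are right that this amounts to continuous dependence of the harmonic metric on the Higgs structure, and right that it is the only genuinely analytic step.

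However, the mechanism you propose for that step does not work as stated. Proposition \ref{com} produces gauge transformations $g_{i}\in U(E)$ only, and Remark \ref{remcom} explicitly warns that one \emph{cannot} arrange $g_{i}\in U_{B}(E)$: the Uhlenbeck compactness argument is incompatible with the fixed basic structure. Corollary \ref{Bcom} only controls the isomorphism class of the limiting basic structure; it does not upgrade the $g_{i}$ to basic gauge transformations. So your claim of ``a subsequence converging up to basic unitary gauge'' is precisely what these results fail to deliver, and convergence modulo $U(E)$ is not sufficient to conclude convergence in $\mathcal M^{s}_{Bharm}(E,D_{\xi})=\mathcal A^{s}_{Bharm}(E,D_{\xi})/(SU_{B}(E)/\langle\zeta_{r}I\rangle)$. (There is also the minor point that a convergent sequence in the quotient $\mathcal M^{st}_{BHiggs}$ must first be lifted to a convergent, or at least $\sigma$-bounded, sequence of representatives.) A correct treatment of this continuity should instead follow the standard route for harmonic/Hermitian--Einstein metrics in families --- uniqueness combined with a priori estimates for the metric itself, or the implicit function theorem at the irreducible solution --- carried out inside the basic Sobolev spaces, rather than routing through the Section~4 compactness results, which were designed for a different (and weaker) purpose.
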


Thus the non-abelian Hodge correspondence on compact Sasakian manifolds  can be stated as the following
\begin{theorem}
There exists a canonical homeomorphism  ${\mathcal M}^{s}_{Bflat }(E,D_{\xi})\cong  {\mathcal M}^{st}_{BHiggs }(E, D_{\xi})$.

\end{theorem}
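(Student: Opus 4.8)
The plan is to obtain the homeomorphism as the composition of the two homeomorphisms already established, in this subsection and the previous one, using the moduli space $\mathcal M^{s}_{Bharm}(E, D_{\xi})$ of simple basic harmonic bundles as the bridge between the flat and the Higgs sides. Concretely, I would combine the inclusion-induced homeomorphism $\mathcal M^{s}_{Bharm}(E,D_{\xi})\cong {\mathcal M}^{s}_{Bflat }(E,D_{\xi})$ with the homeomorphism $\overline{\mathcal I}\colon \mathcal M^{s}_{Bharm}(E, D_{\xi})\to {\mathcal M}^{st}_{BHiggs }(E, D_{\xi})$ to produce the desired canonical map ${\mathcal M}^{s}_{Bflat }(E,D_{\xi})\cong {\mathcal M}^{st}_{BHiggs }(E, D_{\xi})$.

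First I would recall why the flat side is homeomorphic to the harmonic side. By Corlette's theorem every simple flat bundle carries a harmonic metric that is unique up to positive scaling, and by Theorem \ref{t4.2} such a metric is automatically basic once the fixed basic structure $D_{\xi}$ is respected; passing to the induced map on the moduli quotients (with respect to $SU_{B}(E)$ sitting inside $SL_{B}(E)$) yields the homeomorphism $\mathcal M^{s}_{Bharm}(E,D_{\xi})\cong {\mathcal M}^{s}_{Bflat }(E,D_{\xi})$ noted above. Next I would invoke the homeomorphism $\overline{\mathcal I}$ from the preceding theorem, whose construction rests on the linear isomorphism ${\mathcal I}$ intertwining the harmonic decomposition $D=\nabla+\phi$ with the basic Higgs data $(\overline\partial_{E},\theta)$, together with Theorem \ref{harmonicmetric}, which guarantees that the inverse of $\overline{\mathcal I}$ on $\mathcal A_{BHiggsharm}(E, D_{\xi})$ returns a genuine flat structure.

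Composing these two maps gives $\overline{\mathcal I}\circ\Phi^{-1}\colon {\mathcal M}^{s}_{Bflat }(E,D_{\xi})\to {\mathcal M}^{st}_{BHiggs }(E, D_{\xi})$, where $\Phi$ denotes the inclusion-induced homeomorphism $\mathcal M^{s}_{Bharm}(E,D_{\xi})\cong {\mathcal M}^{s}_{Bflat }(E,D_{\xi})$. Since each factor is a homeomorphism, so is the composite, and it is canonical because both bridging maps are manufactured from the \emph{same} harmonic metric attached to the fixed basic structure $D_{\xi}$, so that no auxiliary choice enters. I do not expect a genuine obstacle here, since all of the analytic content---existence, uniqueness and basicness of harmonic metrics, and the non-abelian correspondence itself---has already been extracted in Theorems \ref{t4.2} and \ref{harmonicmetric} and in the two homeomorphism statements. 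The only point meriting care is to verify that the intermediate moduli space $\mathcal M^{s}_{Bharm}(E,D_{\xi})$ is formed with respect to the \emph{same} basic structure $D_{\xi}$ on both sides, so that the two maps share a common source and the composite is literally the non-abelian Hodge correspondence restricted to a single basic stratum.
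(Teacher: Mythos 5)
Your proposal is correct and is exactly the paper's (implicit) argument: the theorem is obtained by composing the inclusion-induced homeomorphism $\mathcal M^{s}_{Bharm}(E,D_{\xi})\cong {\mathcal M}^{s}_{Bflat }(E,D_{\xi})$ with the homeomorphism $\overline{\mathcal I}\colon \mathcal M^{s}_{Bharm}(E, D_{\xi})\to {\mathcal M}^{st}_{BHiggs }(E, D_{\xi})$ established just before the statement. No further comment is needed.
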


\section{Compactness and properness}
We consider  an analogue  of Simpson's compactness \cite[Lemma 2.8]{Si2} for Sasakian case.
Simpson's compactness is an important  property for extending the non-abelian Hodge correspondence to moduli spaces of  semi-stable objects (see \cite{Si3,Si4}).

Let $(M, T^{1,0}M,\eta)$ be a $(2n+1)$-dimensional  compact  Sasakian manifold and $(E, D)$ be a flat complex vector bundle over $M$.
Assume that $(E, D)$ is semi-simple and take  a harmonic metric $h$ on $E$.
Consider   the space of (not necessarily simple) harmonic bundle structures  on $E$ as
\[\mathcal A_{harm}(E)=\{\omega \in A_{flat}(E)\vert  \nabla_{\omega} ^{\ast}\phi_{\omega}=0\}
\]
and 
the space of (not necessarily simple) harmonic bundle structures  on $E$ with fixed basic structure $D_{\xi}$ as \[\mathcal A_{Bharm}(E, D_{\xi})=\{\omega \in \mathcal A_{Bflat}(E, D_{\xi})\vert \overline{\partial}_{\omega}\overline{\partial}_{\omega}= \overline{\partial}_{\omega} \theta_{\omega}=\theta_{\omega}\wedge \theta_{\omega}=0\}.\]

We first see that  similar arguments as \cite[Lemma 2.8]{Si2} works in the weaker sense (see the remark after the proof).
\begin{proposition}\label{com}
Let  $\{D_{i}\}_{i}$ be a sequence in $\mathcal A_{Bharm}(E)$ and $(E_{i}, \theta_{i}) $ the sequence of basic Higgs bundles induced by $\{D_{i}\}_{i}$.
Assume that the coefficients of the characteristic polynomial of $\theta_{i}$  are bounded in $C^{0}$-norm.
Then there are $\tilde{D}_{\infty}\in \mathcal A_{harm}(E)$,  a subsequence of $\{D_{i}\}_{i}$ and a sequence $\{g_{i}\}_{i}$ in $  U(E)$ such that $\{g^{-1}_{i}D_{i}g_{i}\}_{i}$ converges to $\tilde{D}_{\infty}$ with respect to $L^{p}$ norm.
\end{proposition}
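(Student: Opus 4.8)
The plan is to transplant Simpson's proof of \cite[Lemma 2.8]{Si2} into the basic setting, using the basic Laplacian and the basic Sobolev spaces $L^{p}_{k}(A^{\ast}_{B}(M,{\rm End}_{0}(E)))$ introduced above; wherever elliptic regularity or Uhlenbeck gauge fixing is needed, I would extend the relevant basic operators to genuine elliptic operators on $A^{\ast}(M,{\rm End}_{0}(E))$ as in the Remark (via \cite[Lemma 2.8]{BH}). Fixing the harmonic metric $h$ once and for all, each $D_{i}=\nabla_{i}+\phi_{i}$ decomposes with $\nabla_{i}$ an $h$-unitary connection and $\phi_{i}=\theta_{i}+\overline{\theta}_{i}$ its $h$-self-adjoint part, and by Theorem \ref{t4.2} the conditions defining $\mathcal A_{Bharm}(E)$ are equivalent to the Hitchin-type system $F_{\nabla_{i}}+[\theta_{i},\overline{\theta}_{i}]=0$, $\overline{\partial}_{i}\theta_{i}=0$ and $\theta_{i}\wedge\theta_{i}=0$ on basic forms.

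First I would establish a uniform $C^{0}$ bound on $\theta_{i}$. Boundedness of the characteristic coefficients controls only the eigenvalues of $\theta_{i}$, so the real work is to upgrade this to a bound on $|\theta_{i}|_{h}$ itself. As in Simpson, I would derive from the holomorphicity $\overline{\partial}_{i}\theta_{i}=0$, the basic K\"ahler identities of Subsection \ref{subshod}, and the curvature equation $F_{\nabla_{i}}=-[\theta_{i},\overline{\theta}_{i}]$ a differential inequality of the form $\Delta_{B}|\theta_{i}|_{h}^{2}\le C$ for the basic Laplacian $\Delta_{B}$; since $M$ is compact, the maximum principle then yields a bound independent of $i$. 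Every operator involved restricts to basic forms, so basicness is no obstruction at this stage.

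The curvature equation then gives $|F_{\nabla_{i}}|_{h}\le|\theta_{i}|_{h}^{2}\le C$, so the $\nabla_{i}$ have uniformly bounded curvature. For $p$ large I would invoke Uhlenbeck's gauge-fixing theorem to produce $g_{i}\in U(E)$ putting $g_{i}^{-1}\cdot\nabla_{i}$ into a Coulomb gauge relative to the background connection, with connection forms bounded in $L^{p}_{1}$; the $C^{0}$-bounded Higgs fields $g_{i}^{-1}\theta_{i}g_{i}$ acquire $L^{p}_{1}$ bounds from $\overline{\partial}\theta=0$ by elliptic regularity. Passing to a subsequence, $\{g_{i}^{-1}D_{i}g_{i}\}$ converges weakly in $L^{p}_{1}$, hence strongly in $L^{p}$, to some $\tilde{D}_{\infty}$; flatness and the harmonic equation $\nabla^{\ast}\phi=0$ survive such limits, so $\tilde{D}_{\infty}\in\mathcal A_{harm}(E)$.

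The hard part is exactly the point that forces the weaker conclusion: Uhlenbeck's theorem supplies transformations in the full unitary group $U(E)$, with no reason to lie in $GL_{B}(E)$ or to commute with $D_{\xi}$, so in general $g_{i}^{-1}D_{\xi}g_{i}\ne D_{\xi}$ and the limit $\tilde{D}_{\infty}$ can only be asserted to be harmonic, not basic. This is the difficulty recorded in Remark \ref{remcom}. Restoring compatibility with the fixed basic structure is what is \emph{not} obtained here; it is recovered afterwards from the finiteness of basic structures (Corollary \ref{finiss}), which forces the unitary limit to preserve $D_{\xi}$ up to isomorphism and so upgrades the statement to the basic-compatible compactness of Corollary \ref{Bcom}.
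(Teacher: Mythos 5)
Your overall architecture coincides with the paper's: obtain a uniform $C^{0}$ bound on $\theta_{i}$, deduce a uniform curvature bound from $\nabla_{i}^{2}=-\theta_{i}\wedge\overline{\theta}_{i}-\overline{\theta}_{i}\wedge\theta_{i}$, apply Uhlenbeck compactness \cite{Uh} to extract unitary gauges $g_{i}\in U(E)$ and a weak $L^{p}_{1}$ limit of the connections, and then follow Corlette \cite{Cor} to bound the full gauged connections $g_{i}^{-1}D_{i}g_{i}$ in $L^{p}_{1}$ and pass to a flat, harmonic, smooth limit $\tilde{D}_{\infty}$ with strong $L^{p}$ convergence along a subsequence. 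You also correctly isolate why the conclusion lands only in $\mathcal A_{harm}(E)$ rather than $\mathcal A_{Bharm}(E,D_{\xi})$: the Uhlenbeck gauges need not lie in $U_{B}(E)$, which is exactly Remark \ref{remcom}, with Corollary \ref{finiss} restoring the basic structure a posteriori in Corollary \ref{Bcom}.

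The genuine gap is in your first step, which is the only substantive analytic input. The route ``derive $\Delta_{B}|\theta_{i}|_{h}^{2}\le C$ and apply the maximum principle'' does not work: on a compact manifold a one-sided bound $\Delta u\le C$ on a nonnegative function $u$ gives no sup bound on $u$ (every smooth $u$ satisfies such an inequality for some constant, and even with a uniform constant one would need at least an a priori $L^{1}$ bound on $u$ to conclude, which you do not have). Moreover the hypothesis controls only the eigenvalues of $\theta_{i}$, and upgrading eigenvalue bounds to a bound on $|\theta_{i}|_{h}$ is precisely the content of \cite[Lemma 2.7]{Si2}: the relevant Weitzenb\"ock identity produces the quartic term $|[\theta_{i},\overline{\theta}_{i}]|^{2}$, one needs the pointwise estimate that this dominates $|\theta_{i}|^{4}$ up to contributions of the characteristic coefficients, and the conclusion is reached by comparison with blow-up solutions of $\Delta u=u^{2}$ on a polydisc, not by a maximum principle. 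The paper avoids redoing any of this analysis: it observes that in a foliation chart of the transverse holomorphic foliation a basic harmonic bundle restricts, by Theorem \ref{t4.2}, to an honest harmonic bundle over a complex $n$-dimensional polydisc, so Simpson's local estimate applies verbatim and compactness of $M$ yields the uniform global bound $|\theta_{i}|\le C$. You should replace your maximum-principle paragraph by this reduction (or by a faithful reproduction of Simpson's comparison argument); as written, that step would fail.
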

\begin{proof}
Considering foliation charts of the  transverse holomorphic foliation associated with $(\xi, T^{1,0})$, by Theorem \ref{t4.2},
harmonic bundles over a $(2n+1)$-dimensional  compact Sasakian manifold $M$ can be seen locally as a harmonic bundle over a complex $n$-dimensional poly-disc around each point.
The proof of \cite[Lemma 2.7]{Si2} is given by an  estimate on  a harmonic bundle  over a poly-disc and the compactness.
Hence,  without any change the same statement holds for  the basic  Higgs bundle $(E,\theta)$ induced by $D\in \mathcal A_{Bharm}(E, D_{\xi})$. 
And so we can say that $\vert \theta_{i}\vert$ are bounded.
Thus, for the decomposition $D_{i}=\nabla_{i} +\phi_{i}$,   the curvature of the unitary  connection  $\nabla_{i} $ is  bounded by the equation $\nabla_{i}^2=-\theta_{i}\wedge \overline{\theta}_{i}-\overline{\theta}_{i}\wedge\theta_{i}$.
By the Uhlenbeck compactness theorem \cite{Uh}, there exists a subsequence of $\{D_{i}\}_{i}$,  a sequence $\{g_{i}\}_{i}$ in $  U(E)$ and  a unitary connection $\nabla_{\infty}$ such that $g^{-1}_{i}\nabla_{i}g_{i}$ converges to $\nabla_{\infty}$  weakly in $L^{p}_{1}$.

Consider $\tilde{D}_{i}=g^{-1}_{i}\nabla_{i}g_{i}+g^{-1}_{i}\phi_{i}g_{i}$.
By the similar  arguments in \cite[Page 373]{Cor},  $\tilde{D}_{i}$ is bounded in $L^{p}_{1}$ and hence some subsequence weakly  converges to a connection $\tilde{D}_{\infty}$ and  $\tilde{D}_{\infty}$ is flat.
By the assumption $D_{i}\in \mathcal A_{Bharm}(E, D_{\xi})$,  $\tilde{D}_{\infty}$ is harmonic.
By the elliptic regularity $\tilde{D}_{\infty}$ is smooth.
We can say that a subsequence of $\{\tilde{D}_{i}\}_{i}$ converges to $\tilde{D}_{\infty}$ with respect to $L^{p}$ norm.

\end{proof}

\begin{remark}\label{remcom}
The Uhlenbeck compactness theorem is not compatible with fixed basic structures $D_{\xi}$.
 (cf. \cite[Example 6.4]{BH}). 
 We can not assume that   $g_{i}\in U_{B}(E)$   and $\tilde{D}_{\infty} \in  \mathcal A_{Bharm}(E, D_{\xi})$.
We can not say the convergence of basic  Higgs bundles unlike Simpson's compactness \cite[Lemma 2.8]{Si2} on compact K\"ahler manifolds.
\end{remark}
This thing is  a  difficulty  giving  the same arguments in the proof of  \cite[Lemma 2.8]{Si2}.
By Corollary \ref{finiss}, we can state:
\begin{corollary}\label{Bcom}
The basic structure induced by $\tilde{D}_{\infty}$ is isomorphic to $D_{\xi}$.
\end{corollary}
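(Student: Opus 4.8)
\emph{Plan.} Write $\tilde D_i := g_i^{-1} D_i g_i$ for the gauge-transformed connections produced by Proposition \ref{com}, so that $\tilde D_i \to \tilde D_\infty$ in $L^p$ along the chosen subsequence. The first observation is that the isomorphism class of the basic structure is constant along the whole sequence: since $D_i \in \mathcal A_{Bharm}(E, D_\xi)$ we have $(D_i)_\xi = D_\xi$, and conjugation by a smooth unitary gauge transformation gives $(\tilde D_i)_\xi = \iota_\xi(g_i^{-1} D_i g_i) = g_i^{-1} D_\xi g_i$, which is isomorphic to $D_\xi$ as a basic bundle via the automorphism $g_i \in U(E)$ (indeed $g_i \circ (g_i^{-1} D_\xi g_i) = D_\xi \circ g_i$). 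Thus each $(\tilde D_i)_\xi$ represents the single class $[D_\xi]$. On the other hand, taking the contraction $\iota_\xi$ is continuous for the $L^p$-topology, so $(\tilde D_i)_\xi \to (\tilde D_\infty)_\xi$; and because $\tilde D_\infty \in \mathcal A_{harm}(E)$ is a semi-simple flat connection, its basic structure $(\tilde D_\infty)_\xi$ represents one of the \emph{finitely many} isomorphism classes furnished by Corollary \ref{finiss}. The goal is therefore to show that this limiting class is forced to be $[D_\xi]$.

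The mechanism I would use to pin down the class is the open-and-closed stratification of the moduli space. Passing to gauge-equivalence classes, $\tilde D_i$ and $D_i$ define the same point of the moduli space of semi-simple flat bundles of rank $r$ with trivial determinant, and continuity of the quotient map together with $\tilde D_i \to \tilde D_\infty$ in $L^p$ yields $[D_i] = [\tilde D_i] \to [\tilde D_\infty]$ in this Hausdorff moduli space. By Theorem \ref{IMT} and Corollary \ref{fidi}, sharpened by the finiteness of Corollary \ref{finiss} so that only finitely many strata occur, the moduli space is a finite disjoint union of open-and-closed subsets ${\mathcal M}^{s}_{Bflat}(E, D'_{\xi})$ indexed by the isomorphism classes of basic structures. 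Every $[\tilde D_i]$ lies in the stratum indexed by $[D_\xi]$; since that stratum is \emph{closed} and $[\tilde D_i] \to [\tilde D_\infty]$, the limit $[\tilde D_\infty]$ lies in the same stratum. Unwinding the indexing, this says precisely that $(\tilde D_\infty)_\xi$ is isomorphic to $D_\xi$, which is the assertion of the corollary.

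The crux --- and the reason Corollary \ref{finiss} is doing real work --- is the obstruction recorded in Remark \ref{remcom}: the Uhlenbeck gauge transformations $g_i$ are \emph{not} basic and are not known to converge, so one cannot identify $(\tilde D_\infty)_\xi$ by taking a naive limit of the operators $g_i^{-1} D_\xi g_i$, nor by extracting a limiting gauge $g_\infty$ intertwining $D_\xi$ with $(\tilde D_\infty)_\xi$. The information coming out of Proposition \ref{com} is only convergence of the \emph{connections}, not convergence of the basic structures as geometric objects. It is exactly the discreteness of the set of admissible basic structures --- finiteness --- that converts this soft convergence into the rigid conclusion: a convergent sequence of moduli points all lying in one closed stratum cannot escape that stratum, so the limiting isomorphism class must coincide with $[D_\xi]$. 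I would expect the one point needing care in a full write-up to be the passage to the semi-simple moduli space and the verification that the stratification of Theorem \ref{IMT} persists there, via the same comparison $H^1_{B} \cong H^1$ and $H^2_{B} \hookrightarrow H^2$ that underlies the simple case; everything else is formal once finiteness and the open-closed property are in hand.
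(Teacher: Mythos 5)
Your reduction of the problem is on target: you correctly isolate that the only available input is $L^p$-convergence of the connections $\tilde D_i = g_i^{-1}D_ig_i$, that each $(\tilde D_i)_\xi = g_i^{-1}D_\xi g_i$ is isomorphic to $D_\xi$, and that the finiteness of Corollary \ref{finiss} must be what rigidifies the limit. But the mechanism you choose to pin down the limiting class --- the open-and-closed stratification of ${\mathcal M}^{s}_{flat}(SL_r)$ from Theorem \ref{IMT} and Corollary \ref{fidi} --- does not apply here, and the point you flag as ``needing care'' is in fact where the argument breaks. The limit $\tilde D_\infty$ produced by Proposition \ref{com} lies only in $\mathcal A_{harm}(E)$: it is semi-simple but need not be simple (and in the setting of Proposition \ref{com} the $D_i$ themselves need not be simple either). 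The stratum ${\mathcal M}^{s}_{Bflat}(E,D_\xi)$ is closed \emph{in} ${\mathcal M}^{s}_{flat}(SL_r)$, which is an open locus inside the larger character variety; a sequence in a relatively closed subset of an open subspace can perfectly well converge to a point outside that open subspace, and then closedness of the stratum says nothing about the limit. Establishing an analogous open-closed decomposition for a moduli space of semi-simple objects is precisely what the paper declares it does \emph{not} do (it is listed as future work in the introduction), so your argument would be circular or at least would require a substantial new result.

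The paper's proof avoids the moduli space of flat connections entirely. It works in the space ${\mathcal B}(E,h)$ of unitary basic structures with the $U(E)$-action, proves that this action is proper (via horizontal lifts of the flow of $\xi$, as in Kobayashi's argument), so that ${\mathcal B}(E,h)/U(E)$ is Hausdorff, and then considers the continuous equivariant map $\mu:\mathcal A_{harm}(E)/U(E)\to{\mathcal B}(E,h)/U(E)$, $D\mapsto D_\xi$ (well defined because harmonic metrics are basic). Corollary \ref{finiss} says the image of $\mu$ is finite, hence discrete in a Hausdorff space; since $\mu(\tilde D_i)=[D_\xi]$ for all $i$ and $\tilde D_i\to\tilde D_\infty$, it follows that $\mu(\tilde D_\infty)=[D_\xi]$. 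This is the step your write-up is missing: some Hausdorff target in which the classes of basic structures of \emph{all} semi-simple harmonic connections (not just the simple ones) form a discrete set. If you want to salvage your approach, you should replace the appeal to Theorem \ref{IMT} by a direct properness/Hausdorffness argument for the gauge action on basic structures, which is exactly what the paper does.
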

\begin{proof}
We consider the space ${\mathcal B}(E,h)$ of unitary  basic structures on $E$ as an affine space modeled on $A^{0}(M, {\frak u}(E))$ with the unitary gauge group $U(E)$ such that the action defined by
$g^{-1}\alpha g+g^{-1}D_{\xi} g$ for $g\in U(E)$, $\alpha\in A^{0}(M, {\frak u}(E))$.
By  the same proof as \cite[Proposition7.1.14]{Ko} by  considering horizontal lifts of the flow of $\xi$ associated with basic structures (see \cite[Page 16]{BK2})),  we can say that the action of $U(E)$ on ${\mathcal B}(E,h)$ is proper.
Thus the quotient ${\mathcal B}(E,h)/U(E)$ is Hausdorff.
Define the continuous map $\mu : \mathcal A_{harm}(E)/ U(E)\to  {\mathcal B}(E,h)/U(E)$ induced by 
 the equivariant continuous map $\mathcal A_{harm}(E)\ni D\mapsto D_{\xi} \in  {\mathcal B}(E,h)$ which is well-defined by the basicness of harmonic metrics (see Theorem \ref{t4.2}).
Since $\mathcal A_{harm}(E)$ consists of semi-simple flat connections  by  the Corlette theorem \cite{Cor}, 
 by Corollary \ref{finiss}, the image of $\mu$ is finite and discrete by the Hausdorffness.
 Thus, the statement follows from the assumption $D_{i}\in \mathcal A_{Bharm}(E, D_{\xi})$.

\end{proof}

We give a Sasakian analogue  of Hitchin's properness \cite[Theorem 8.1]{Hit}.
Consider  the map $\sigma: {\mathcal M}^{st}_{BHiggs }(E, D_{\xi})\to \bigoplus ^{{\rm rank} E} _{i=2}H^{0,0}_{B}(M, Sym^{i} T^{1,0\ast})$ defined by   the coefficients of the characteristic polynomial of Higgs fields $\theta$.

\begin{corollary}\label{prop}
If every semi-simple flat bundle  with fixed basic structure $D_{\xi}$ is simple, then the map $\sigma: {\mathcal M}^{st}_{BHiggs }(E, D_{\xi})\to \bigoplus ^{{\rm rank} E} _{i=2}H^{0,0}_{B}(M, Sym^{i} T^{1,0\ast})$ is proper.
\end{corollary}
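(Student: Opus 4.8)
The plan is to deduce the properness of $\sigma$ from the compactness result just established (Corollary \ref{Bcom}) together with the finiteness coming from Corollary \ref{finiss}. Properness means that the preimage $\sigma^{-1}(K)$ of any compact set $K \subset \bigoplus^{{\rm rank}E}_{i=2} H^{0,0}_B(M, Sym^i T^{1,0\ast})$ is compact. Since the target is a finite-dimensional vector space, it suffices to show that for any sequence $\{[\omega_i]\}_i$ in ${\mathcal M}^{st}_{BHiggs}(E, D_\xi)$ whose images $\sigma([\omega_i])$ stay bounded, some subsequence converges in ${\mathcal M}^{st}_{BHiggs}(E, D_\xi)$. First I would translate the boundedness of $\sigma([\omega_i])$ into the hypothesis of Proposition \ref{com}: a bound on the coefficients of the characteristic polynomial of the Higgs fields $\theta_i$ in the basic cohomology classes gives (via elliptic regularity and the harmonic representatives) a $C^0$-bound on those coefficients, so Proposition \ref{com} applies.

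Next I would pass through the homeomorphism ${\mathcal M}^{st}_{BHiggs}(E, D_\xi) \cong {\mathcal M}^s_{Bharm}(E, D_\xi)$ and work on the side of basic harmonic bundles, lifting the sequence $\{[\omega_i]\}$ to representatives $\{D_i\}$ in $\mathcal A_{Bharm}(E, D_\xi) \subset \mathcal A_{harm}(E)$. Applying Proposition \ref{com}, I obtain a subsequence, unitary gauge transformations $g_i \in U(E)$, and a limit $\tilde D_\infty \in \mathcal A_{harm}(E)$ with $g_i^{-1} D_i g_i \to \tilde D_\infty$ in $L^p$. The key point, and the reason the hypothesis ``every semi-simple flat bundle with fixed basic structure $D_\xi$ is simple'' appears, is that $\tilde D_\infty$ must be identified as a point of the \emph{same} moduli space ${\mathcal M}^s_{Bharm}(E, D_\xi)$. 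By Corollary \ref{Bcom} the basic structure $(\tilde D_\infty)_\xi$ is isomorphic to $D_\xi$, so there is $k \in U(E)$ with $k^{-1}(\tilde D_\infty)_\xi k = D_\xi$; replacing $\tilde D_\infty$ by $k^{-1}\tilde D_\infty k$ places the limit in $\mathcal A_{harm}(E)$ with fixed basic structure $D_\xi$, i.e. in $\mathcal A_{Bharm}(E, D_\xi)$. The simplicity hypothesis then guarantees $\tilde D_\infty$ is a simple flat bundle, hence defines a genuine point of ${\mathcal M}^s_{Bharm}(E, D_\xi)$ rather than degenerating to a strictly semi-simple object on the boundary.

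Finally I would argue that the convergence of $\{g_i^{-1} D_i g_i\}$ to $\tilde D_\infty$ in $L^p$, after absorbing $k$, descends to convergence of $\{[\omega_i]\}$ to the class of $\tilde D_\infty$ in the quotient moduli space. Here one uses that the gauge transformations $g_i$ (together with the fixed $k$) are unitary, so the limiting class lies in ${\mathcal M}^s_{Bharm}(E, D_\xi)$, and that the moduli-space topology is the quotient topology; combined with the Hausdorffness of the moduli space (inherited from the quasi-projective structure on ${\mathcal M}^s_{flat}(SL_r)$) this gives the desired convergence. Transporting back through the homeomorphism $\overline{\mathcal I}$ yields a convergent subsequence of $\{[\omega_i]\}$ in ${\mathcal M}^{st}_{BHiggs}(E, D_\xi)$, establishing properness of $\sigma$.

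The main obstacle I expect is precisely the reconciliation of the Uhlenbeck-type limit with the fixed basic structure, which is exactly the difficulty flagged in Remark \ref{remcom}: a priori the unitary gauge transformations $g_i$ from Proposition \ref{com} need not preserve $D_\xi$, and $\tilde D_\infty$ need not lie in $\mathcal A_{Bharm}(E, D_\xi)$. The whole weight of the argument rests on Corollary \ref{Bcom} to repair this—it forces the limiting basic structure to be isomorphic to $D_\xi$, so that up to a single additional unitary gauge transformation the limit does preserve the prescribed basic structure. The simplicity assumption is what then prevents the limit from escaping the stable (simple) locus, which would otherwise break properness at the boundary between stable and strictly semi-stable objects.
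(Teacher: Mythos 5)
Your proposal is correct and follows essentially the same route as the paper: the paper's own proof is a one-sentence appeal to Proposition \ref{com} together with Corollary \ref{Bcom} under the simplicity hypothesis, which is exactly the argument you spell out in detail (including the gauge-fixing of the limiting basic structure and the descent of convergence to the moduli space, which the paper leaves implicit).
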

\begin{proof}
Under the assumption, the limit in ${\mathcal M}^{st}_{BHiggs }(E, D_{\xi})\cong  \mathcal M^{s}_{Bharm}(E, D_{\xi})$ exists  
taking a subsequence of a sequence in the pre-image of a closed  bounded set  for the map $\sigma$ by Proposition \ref{com} and  Corollary \ref{Bcom}.
\end{proof}

\begin{example}
On the smoothly trivial complex  line bundle $M\times \C$,  for any  complex number $C$, we define the basic bundle structure by the partial derivation   $L_{\xi}+C$ (see \cite[Example 3.7.]{BK2}). 
We denote by $E_{C}$ such basic line bundle.
Then, we  show that if $C\not=0$, then any flat line bundle does not induce a basic bundle structure which is isomorphic to the basic line bundle $E_{C}$ by $[d\eta]\not=0$ in $H^{2}_{B}(M)$.
By the isomorphism between the basic cohomology $H_{B}^{1}(M)$ and the first de Rham cohomology,
a flat connection on a line bundle can be written $d+a$ for a closed $a\in A_{B}^{1}(M)$.
Suppose   this connection induces  a basic bundle structure which is isomorphic to the basic line bundle $E_{C}$.
 Then we have  a smooth $\C^{\ast}$-valued  function $b$ satisfying  $b^{-1}db(\xi)+C=0$.
 This   implies  $b^{-1}db+ C\eta\in  A_{B}^{1}(M)$ and hence $Cd\eta$ is exact.
 This happens only if $C=0$.

We assume that the universal covering of $M$ is  $\widetilde{PU}(1,1)=\widetilde{SL}_{2}(\R)$ with a left-invariant Sasakian structure.
We note that any $3$-dimensional compact Sasakian manifold $M$ with $c_{1,B}(T^{1,0})=-C[\eta]$ for $C>0$ can be deformed to a Sasakian manifold of such form (see \cite{KM}).
Consider the trivial vector bundle $E=M\times \C^{2}$ of rank $2$ equipped with the basic structure $D_{\xi}=L_{\xi}+\left(
\begin{array}{cc}
C & 0 \\
0 & -C
\end{array}
\right)$ that is $(E,D_{\xi})\cong E_{C}\oplus E_{-C}$.
We take $C$ such that $E_{C}^{2}\cong T^{1,0}$.
Then,  $\mathcal A^{s}_{Bflat}(E, D_{\xi})$ contains the flat connection defined by the Maurer-Cartan form on the Lie group $\widetilde{PU}(1,1)=\widetilde{SL}_{2}(\R)$.
Thus, in this case, $\mathcal M^{s}_{Bflat}(E, D_{\xi})$ is non-empty.
By the above argument on the line bundles, any flat bundle in  $\mathcal A_{Bflat}(E, D_{\xi})$ does not admit non-trivial flat sub-bundle.
Thus, by Corollary \ref{prop}, the map $\sigma={\rm det}: {\mathcal M}^{st}_{BHiggs }(E, D_{\xi})\to H^{0,0}_{B}(M, ( T^{1,0\ast})^2)$ is proper (cf. \cite[Theorem 8.1]{Hit}).

Define the holomorphic structure on $E_{C}$ defined by the connection $d+C\eta$.
We have the  stable Higgs bundle $E_{C}\oplus E_{-C}$ with the Higgs field $\theta=\left(
\begin{array}{cc}
0 & \lambda \\
1 & 0
\end{array}
\right)$ for any \[\lambda \in H^{0,0}_{B}(M, {\rm Hom}( E_{C}, E_{-C})\otimes  T^{1,0\ast})=H^{0,0}_{B}(M, ( T^{1,0\ast})^2).\]
Thus, $\sigma$ is surjective.

We notice that $M$ is a Seifert $S^1$-fibration over a compact hyperbolic orbifold  Riemannian surface \cite{RV}.
It would be very interesting to study relations between  moduli spaces  for  $3$-dimensional Sasakian manifolds $M$ and moduli spaces for hyperbolic (orbifold)  Riemannian surfaces studied  in \cite{Hit}, \cite{NS} and many other works.
Studies of this  thing  in greater detail will  appear in other papers.
We give some observations in  the more specific case.
Assume that $M$ is the unit tangent bundle of compact hyperbolic Riemannian surface $\Sigma$.
Then $M=\Gamma \backslash PSL_{2}(\R)=\widetilde{\Gamma} \backslash\widetilde{SL}_{2}(\R) $ where $\Gamma$ is the fundamental group of $\Sigma$ with the embedding $\Gamma \subset PSL_{2}(\R)$ associated with a hyperbolic metric on 
$\Sigma$ and $\widetilde{\Gamma}$ is a central extension of $\Gamma$ by $\Z$.
More precisely, the Sasakian structure on $M$ induces  a principal $S^{1}$-bundle over $\Sigma$ whose homotopy long exact sequence is a central extension
\begin{equation}
\xymatrix{
1\ar[r]&\pi_{1}(S^1,\, 1)=\Z\ar[r]&\pi_{1}(M) =\widetilde{\Gamma}\ar[r]&
\pi_{1}(\Sigma)=\Gamma\ar[r]&1.}
\end{equation}
In this case, consider the pull-back $\Gamma_{2}\subset SL_{2}(\R)$ of $\Gamma\subset PSL_{2}(\R)$ for  the surjection $SL_{2}(\R)\to PSL_{2}(\R)$ associated with the M\"obius transformation. 
Then the map $\widetilde{\Gamma}\to \Gamma_{2}$  which is the restriction of  the covering map $\widetilde{SL}_{2}(\R) \to SL_{2}(\R)$ is the monodromy representation of the flat connection $D$.
By $-I\in  \Gamma_{2}$, the image of  the central subgroup $\Z$  in $\widetilde{\Gamma}$  for  this representation is $\{\pm I\}$.
Identifying   $R^{s}(\pi_{1}M, SL_{2}(\C))/SL_{2}(\C)$ with ${\mathcal M}^{s}_{flat }(SL_2)$,  we regard ${\mathcal M}^{s}_{Bflat }(E, D_{\xi})$ as a subspace in $R^{s}(\pi_{1}M, SL_{2}(\C))/SL_{2}(\C)$.
Consider the subspace  $R^{s, odd}(\pi_{1}M, SL_{2}(\C))/SL_{r}(\C)$ in $R^{s}(\pi_{1}M, SL_{r}(\C))/SL_{r}(\C)$ such that 
$R^{s, odd}(\pi_{1}M, SL_{2}(\C))$ is the subspace of simple representations whose images of   the central subgroup $\Z$  in $\widetilde{\Gamma}$ are $\{\pm I\}$.
It is known that $R^{s, odd}(\pi_{1}M, SL_{2}(\C))$ is connected \cite[Theorem 9.20]{Hit}.
Since $R^{s, odd}(\pi_{1}M, SL_{2}(\C))$ contain the monodromy representation of the flat connection $D$ and  ${\mathcal M}^{s}_{Bflat }(E, D_{\xi})$ is an  open and closed set in $R^{s}(\pi_{1}M, SL_{2}(\C))/SL_{2}(\C)={\mathcal M}^{s}_{flat }(SL_2)$ by Corollary \ref{fidi}, we have  $R^{s, odd}(\pi_{1}M, SL_{2}(\C))/SL_{r}(\C)\subset {\mathcal M}^{s}_{Bflat }(E, D_{\xi})$.
Notice that the restriction of the monodromy representation on  the central subgroup $\Z$  in $\widetilde{\Gamma}$  is given by horizontal lifts of the flow of $\xi$.
Thus, the conjugacy class of the image of   the central subgroup $\Z$  in $\widetilde{\Gamma}$ is an invariant of isomorphism classes of basic bundles.
This implies \[R^{s, odd}(\pi_{1}M, SL_{2}(\C))/SL_{r}(\C)= {\mathcal M}^{s}_{Bflat }(E, D_{\xi}).\]
We notice that the space $R^{s, odd}(\pi_{1}M, SL_{2}(\C))/SL_{r}(\C)$ is homeomorphic to the moduli space of the stable Higgs bundles of rank $2$  with fixed determinant and   odd degree over the Riemannian surface \cite{Hit}.
This homeomorphism is derived  through the solutions of Hitchin's self-dual equations up to “$SO_{3}$”-gauge transformations.

We know that there is a lifting representation $\Gamma\to SL_{2}(\R)$ of  $\Gamma\subset PSL_{2}(\R)$ \cite{Cu}.
Via the composition $\tilde\rho:\widetilde{\Gamma}\to SL_{2}(\R)$  of $\Gamma\to SL_{2}(\R)$ and the surjection $\widetilde{\Gamma}\to \Gamma$, image of  the central subgroup $\Z$  in $\widetilde{\Gamma}$  for  this representation is trivial.
Thus,  considering a flat bundle $(\tilde{E}, \tilde{D})$ whose monodromy representation is $\tilde\rho$
we have a  non-empty stratum ${\mathcal M}^{s}_{Bflat }(\tilde{E}, \tilde{D}_{\xi})$ in ${\mathcal M}^{s}_{flat }(SL_2)$ which is different from ${\mathcal M}^{s}_{Bflat }(E, D_{\xi})$ as above.

\end{example}

\end{document}